\documentclass[a4paper,12pt]{article}
\usepackage{amsmath, amssymb}
\usepackage[english]{babel}
  \usepackage{paralist}
  \usepackage{graphics} 
  \usepackage{epsfig} 
\usepackage{graphicx}
\usepackage{epstopdf}
 \usepackage[colorlinks=true]{hyperref}
 \usepackage{array}
\usepackage[latin1]{inputenc}
\usepackage{xcolor}
\usepackage{esint}
\usepackage{latexsym,amsfonts}
\usepackage{amsthm}
\usepackage{verbatim}
\usepackage[bb=boondox]{mathalfa}

\usepackage{array}
\newtheorem{theorem}{Theorem}[section]
\newtheorem{cor}{Corollary}[section]

\newtheorem{lem}{Lemma}[section]

\theoremstyle{definition}

\newtheorem*{theorem*}{Theorem}

\numberwithin{equation}{section}

\newcommand{\D}{\mathbb D}
\newcommand{\T}{\partial \mathbb{D}}
\newcommand{\bigint}{\begin{picture}(10,10)
\put(-1,2){\line(1,0){10}}
\end{picture}\kern-14pt\int}

\def\XXint#1#2#3{{\setbox0=\hbox{$#1{#2#3}{\int}$}
     \vcenter{\hbox{$#2#3$}}\kern-.5\wd0}}

\title{Contractive analytic self-mappings of the disc }

\author{Artur Nicolau}

\date{}

\begin{document}
 
\maketitle

\begin{abstract}
Analytic self-maps of the unit disc whose hyperbolic derivative is uniformly bounded by a constant smaller than one, are called contractive. We describe these maps in terms of their Aleksandrov-Clark measures and in terms of their inner-outer factorization. In addition, we show that contractive inner functions can be described in terms of a certain mixing property of its boundary values. We also present other results on the boundary behavior of contractive inner functions. 
\end{abstract}

\footnote{The author is supported in part by the Generalitat de Catalunya (grant 2021 SGR 00071), the Spanish Ministerio de Ciencia e Innovaci\'on (project PID2021-123151NB-I00) and the Spanish Research Agency through the Mar\'ia de Maeztu Program (CEX2020-001084-M).}

\section{Introduction}

Let $\D$ be the unit disc in the complex plane and let $\T$ be the unit circle. Let $d_h (z, w)$ be the hyperbolic distance between the points $z,w \in \D$, defined by 
$$
d_h (z,w) = \frac{1}{2} \log \frac{1+ \rho(z,w)^2}{1- \rho (z,w)^2}, \qquad \rho (z,w) = \bigg| \frac{z-w}{1- \overline{w}z}\bigg|.
$$
The Schwarz Lemma says that any analytic mapping $f: \D \rightarrow \D$ contracts hyperbolic distances, that is, $d_h (f(z), f(w)) \leq d_h (z,w)$ for any $z,w \in \D$, or equivalently its derivative $D_h (f)$ with respect the hyperbolic metric is bounded by $1$. A calculation shows that
$$
D_h (f) (z) = \lim_{w \to z } \frac{d_h (f(w), f(z))}{d_h (w,z)} =  \frac{(1-|z|^2) |f'(z)|}{1- |f(z)|^2}, \quad z \in \D .
$$
Moreover if there exists a point $z_0 \in \D$ such that $D_h (f) (z_0) = 1$, then $D_h (f) (z) =1$ for any $z \in \D$ and $f$ is an automorphism of $\D$. Hence, the hyperbolic derivative $D_h (f) (z)$ measures how much $f$ deviates from being an automorphism of $\D$ near the point $z$. 

An analytic mapping $f: \D \rightarrow \D$ is called contractive if there exists a constant $D<1$ such that $d_h (f(z), f(w)) \leq D \,  d_h (z,w)$ for any $z,w \in \D$, or equivalently, if $D(f)=\sup \{D_h (f) (z) : z \in \D \} < 1$. One nice feature of contractive analytic mappings is conformal invariance, that is, $\tau_1 \circ f \circ
\tau_2$ is contractive whenever $f$ is contractive and $\tau_1 , \tau_2$ are automorphisms of $\D$. Moreover $D(\tau_1 \circ f \circ \tau_2) = D(f)$. The main goal of this paper is to describe analytic self-mappings of $\D$ in different terms: their Aleksandrov-Clark measures; their inner-outer factorization and their boundary behavior. 

\medskip

Given an analytic mapping $f: \D \rightarrow \D$ and a value $\alpha \in \partial \D$, the function $(\alpha + f)/(\alpha - f)$ has positive real part. Hence there exist a positive measure $\sigma_\alpha = \sigma_\alpha (f)$ on $\partial \D$ and a constant $C_\alpha \in \mathbb{R}$ such that
\begin{equation}
    \label{AC}
    \frac{\alpha + f(z)}{\alpha - f(z)} = \int_{\partial \D} \frac{\xi + z}{\xi - z} d \sigma_\alpha (\xi) + i C_\alpha , \quad z \in \D,  
\end{equation}
or equivalently,   
\begin{equation}
    \label{AC11}
    \frac{1-|f(z)|^2}{|\alpha - f(z)|^2} = \int_{\partial \D} \frac{1-|z|^2}{|\xi - z|^2} d \sigma_\alpha (\xi), \quad z \in \D .
\end{equation}
The measures $\{\sigma_\alpha : \alpha \in \partial \D\}$ are called the Aleksandrov-Clark measures of $f$. They were introduced by Clark in the seventies and after the work of Aleksandrov in the nineties, have become an important topic in function theory. Actually many properties of an analytic selfmapping $f$ of $\D$ can be translated to properties of the Aleksandrov-Clark measures of $f. $ The nice surveys \cite{PolSar} and \cite{Saks}, as well as chapter 9 of \cite{cima}, contain many of their properties and a wide range of applications. We mention that $f$ is an inner function if and only if $\sigma_\alpha$ is a singular measure with respect to Lebesgue measure $dm$ on $\partial \D$ for some (and then all) $\alpha \in \partial \D$. Moreover in this case, $\sigma_\alpha$ is concentrated on the set $f^{-1} (\{\alpha \})$. 

Let $dA$ denote the normalized Lebesgue measure on $\D$ and for $1<p<\infty$ let $L^p (dA)$ be the standard Lebesgue spaces. The Bergman projection $P$ given by
$$
P (g) (z) = \int_{\D} \frac{g(w)}{(1- \overline{w}z)^2} d A(w), \quad z\in \D, \quad g \in L^2(dA), 
$$
is the orthogonal projection from $L^2(dA)$ onto the closed subspace of analytic functions in $\D$ which are in $L^2 (\D)$. It is well known that $P$ is a bounded operator from $L^p (dA)$ into $L^p (dA)$ for any $1<p<\infty$. Given an arc $I \subset \partial \D$ consider the Carleson box $Q= Q(I) =  \{z \in \D : 1-|z| \leq m(I), z/|z| \in I \}$. We will also use the standard notation $l(Q) = m(I)= |I|$ if $Q=Q(I)$. Given a positive integrable function $w$ in $\D$ consider the weighted space $L^p (w dA)$ of measurable functions $g$ in $\D$ such that $ \int_{\D} |g(z)|^p w(z) dA(z) < \infty $. Bekoll\'e and Bonami studied the boundedness of the Bergman projection on weighted Lebesgue spaces and proved that $P$ is a bounded operator from $L^p (w dA)$ to $L^p (w dA)$ if and only the weight $w$ satisfies
\begin{equation}
    \label{BB}
    \sup \bigg(\frac{1}{A(Q)} \int_Q w dA \bigg) \bigg( \frac{1}{A(Q)} \int_Q w^{-1/(p-1)} dA \bigg)^{p-1} < \infty , 
\end{equation}
where the supremum is taken over all Carleson boxes $Q \subset \D$. See \cite{BB}. The weights $w$ satisfying \eqref{BB} are called Bekoll\'e-Bonami $B_p$ weights.

Our first result is a description of contractive mappings in terms of their Aleksandrov-Clark measures, in which Bekoll\'e-Bonami $B_2$ weights play a central role. 

\begin{theorem}
\label{B2weights}
    Let $f: \mathbb{D} \rightarrow \mathbb{D}$ be an analytic mapping. Let $\sigma$ be an Aleksandror-Clark measure of $f$ and let $u$ be its Poisson integral defined by 
    $$
u (z) = \int_{\partial \D} \frac{1-|z|^2}{|\xi - z|^2} d \sigma (\xi), \quad z \in \D. 
$$
The following conditions are equivalent:

$(a)$ The mapping $f$ is contractive.

$(b)$ There exists a constant $c>0$ such that
\begin{equation}
\label{boxest}
   \frac{\sigma(I)}{|I|} \geqslant c u \left(z_{I}\right) ,  
\end{equation}
for any arc $ I \subset \partial \mathbb{D}$. Here $z_{I}=(1-|I|) \xi_{I}$, where $\xi_{I}$ denotes the center of $I$.

$(c)$ $u$ is a Bekoll\'e-Bonami $B_2$ weight, that is,
$$
[u]_{B_2} : =\sup \left(\frac{1}{A(Q)} \int_{Q} u d A\right)\left( \frac{1}{A(Q)} \int_{Q} \frac{dA}{u} \right)<\infty, 
$$
where the supremum is taken over all Carleson boxes $Q \subset \mathbb{D}$.

\end{theorem}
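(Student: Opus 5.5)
The plan rests on one identity. Fix $\alpha$, put $F=(\alpha+f)/(\alpha-f)$, and note that by \eqref{AC} we have $u=\operatorname{Re}F$ and $|\nabla u|=|F'|$. A direct computation gives
$$
\frac{(1-|z|^2)|F'(z)|}{2\operatorname{Re}F(z)}=\frac{(1-|z|^2)|f'(z)|}{1-|f(z)|^2}=D_h(f)(z),
$$
so that $(1-|z|^2)|\nabla u(z)|=2D_h(f)(z)\,u(z)$. Harnack's inequality is the case $D_h(f)\le 1$. Hence $(a)$ is equivalent to the strict Harnack gradient bound $(1-|z|^2)|\nabla \log u(z)|\le 2D$ for some $D<1$. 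Integrating $\nabla\log u$ along hyperbolic geodesics then gives
$$
u(z)\ge c\,u(z_I)\Big(\tfrac{1-|z|}{|I|}\Big)^{D}, \qquad z\in Q(2I).
$$
I will prove $(a)\Rightarrow(b)$, $(a)\Rightarrow(c)$, $(b)\Rightarrow(a)$ and $(c)\Rightarrow(a)$.

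\emph{$(a)\Rightarrow(b)$.} Fix $I$, a small $t\in(0,1)$, and set $\delta=t|I|$. Average $u((1-\delta)\xi)$ over $\xi$ in the middle half $I'$ of $I$ and split $\sigma=\sigma|_I+\sigma|_{\partial\D\setminus I}$. The inner part contributes at most $\sigma(I)$ to $\int_{I'}u((1-\delta)\xi)\,dm(\xi)$. For $\xi\in I'$ the outer mass lies at distance $\gtrsim|I|$, so there the Poisson kernel at $(1-\delta)\xi$ is at most $Ct$ times the kernel at $z_I$. The outer part therefore contributes at most $Ct|I|u(z_I)$. The lower bound above gives $\int_{I'}u((1-\delta)\xi)\,dm\ge c\,t^{D}|I|u(z_I)$. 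Since $D<1$ we can fix $t$ with $c\,t^{D}>2Ct$, and this yields $\sigma(I)\ge c'|I|u(z_I)$.

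\emph{$(a)\Rightarrow(c)$.} Let $Q=Q(I)$. Interchanging integrals, $\frac{1}{A(Q)}\int_Q P_z(\xi)\,dA(z)\lesssim P_{z_I}(\xi)$ for every $\xi$, where $P_z$ is the Poisson kernel. Indeed, for $\xi$ near $I$ the integral $\int_Q \frac{1-|z|}{|\xi-z|^2}dA$ is $O(|I|)$. Hence the average of $u$ over $Q$ is at most $C u(z_I)$. The lower bound gives $\frac{1}{A(Q)}\int_Q u^{-1}dA\lesssim u(z_I)^{-1}\frac{1}{A(Q)}\int_Q (|I|/(1-|z|))^{D}dA\lesssim u(z_I)^{-1}$, the last integral being finite because $D<1$. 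Multiplying the two bounds gives $[u]_{B_2}<\infty$.

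\emph{$(b)\Rightarrow(a)$ and $(c)\Rightarrow(a)$.} Here I argue by contradiction and normal families. Suppose $D_h(f)(z_n)\to1$. Let $\varphi_n$ be the automorphism with $\varphi_n(0)=z_n$, and choose automorphisms $\tau_n$ and unimodular $\alpha_n$ so that $g_n=\tau_n\circ f\circ\varphi_n$ has $g_n(0)=0$ and its Aleksandrov--Clark measure $\sigma^{(n)}$ is a probability measure. After passing to a subsequence, $g_n\to g$ locally uniformly with $D_h(g)(0)=1$, so $g$ is a rotation. The measures $\sigma^{(n)}$ then converge weak-$*$ to a point mass $\delta_{\xi_0}$, and $u_n\to P_{z}(\xi_0)$ locally uniformly.

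For $(b)$, the condition transforms under $\varphi_n$ with a comparable constant, because arcs and the points $z_I$ are mapped to comparable arcs and points up to bounded distortion at bounded hyperbolic scale. Passing to the limit, $\delta_{\xi_0}$ satisfies \eqref{boxest}, which fails for arcs $I$ near $\xi_0$ not containing it. For $(c)$, $1/u_\infty(z)=|\xi_0-z|^2/(1-|z|^2)$ has divergent integral over any box $Q(I)$ with $\xi_0\in I$. By Fatou's lemma this forces $\int_Q u_n^{-1}dA\to\infty$ on fixed boxes, while $\int_Q u_n\,dA$ stays bounded below, contradicting a uniform $B_2$ bound.

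The main obstacle is precisely this uniformity: the $B_2$ constant and the constant in \eqref{boxest} must be essentially preserved by the change of variables $\varphi_n$. For $(c)$, rather than prove full Möbius invariance of $B_2$, I will work with boxes of size comparable to $1-|z_n|$ around the boundary point nearest $\xi_0$'s preimage. On such boxes $\varphi_n$ has Jacobian comparable to a constant and maps boxes to sets that contain, and are contained in, boxes of comparable size. This should give uniform bounds only on the bounded family of boxes needed in the limit. The analogous localization handles $(b)$.
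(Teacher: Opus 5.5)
Your proposal is correct in outline. Apart from $(a)\Rightarrow(c)$, it follows a different route from the paper, which proves $(a)\Rightarrow(b)\Rightarrow(a)$ and $(a)\Rightarrow(c)\Rightarrow(b)$. In $(a)\Rightarrow(c)$ your Fubini bound $\frac{1}{A(Q)}\int_Q P_z\,dA\lesssim P_{z_I}$ is just another proof of the upper half of Lemma~\ref{meanvalue}, and the rest matches the paper.

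\emph{$(a)\Rightarrow(b)$.} The paper uses the gradient formula \eqref{tag2}. If $\sigma(I)\le\varepsilon|I|u(z_I)$, then at $z_{\delta I}$ with $\delta=\varepsilon^{1/4}$ almost all the Poisson mass lies outside $I$. There $\arg\tau_{z_{\delta I}}$ is nearly constant, so there is no cancellation and $D_h(f)(z_{\delta I})\ge 1-O(\varepsilon^{1/4})$. Your integrated version plays the decay of $u$ at most like $t^{D}$ at height $t|I|$ inside $Q(I)$ against the $O(t)$ decay of the contribution of $\sigma$ restricted to $\partial\mathbb{D}\setminus I$. It is correct and more elementary. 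However, because you need $t^{1-D}$ below a fixed constant, as written it yields a constant in \eqref{boxest} of order $\exp(-C/(1-D))$. The paper's argument gives a constant polynomial in $1-D$, of order $(1-D)^4$.

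\emph{The converses.} The paper argues directly. The twelve-arc antipodal cancellation gives $(b)\Rightarrow(a)$, and the dyadic estimate $u(z_J)\lesssim(2^{-n}+2^{n-N})u(z_I)$ gives $(c)\Rightarrow(b)$. Your compactness argument replaces both with one observation: where $D_h(f)\to1$, the renormalized Clark measures converge to a point mass, and a point mass violates both \eqref{boxest} and the $B_2$ condition. This is economical and conceptual, but it gives no control of $D(f)$ in terms of $c$ or $[u]_{B_2}$.

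\emph{Where to localize.} One detail of your localization has to be placed differently. Normalize $\varphi_n(w)=(z_n+w)/(1+\overline{z_n}w)$ and let $\zeta=\lim z_n/|z_n|$. The maps $\varphi_n$ have uniformly bounded distortion only away from $-\zeta$. A fixed box $Q$ (or arc $J$) whose closure avoids $-\zeta$ is mapped into a Carleson box of side $\simeq 1-|z_n|$ at $z_n/|z_n|$, with $|\varphi_n'|\simeq 1-|z_n|$ on it. Harnack's inequality and Lemma~\ref{meanvalue} then give, uniformly in $n$, $\frac{1}{A(Q)}\int_Q u_n^{-1}\,dA\lesssim[u]_{B_2}$, and $\sigma^{(n)}(J)\gtrsim |J|\,u_n(z_J)$ in case (b).

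The problem is that $\xi_0$ can equal $-\zeta$, and this is in fact the typical case. If $\sigma$ vanishes on an arc around $1$ and $z_n\to1$ radially, then $u_n(w)\to(1-|w|^2)/|1+w|^2$. In that situation a box $Q(I)$ with $\xi_0\in I$, or a box placed at the image of $\xi_0$, is exactly where the localization breaks down, since $\varphi_n$ maps neighbourhoods of $-\zeta$ onto most of the disc. The fix is simple: $1/u_\infty$ is non-integrable on every Carleson box, so take the test box away from $-\zeta$. For (b), arcs near $\xi_0$ whose closure misses $\xi_0$ automatically stay away from $-\zeta$, so that part is fine.
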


We now mention some remarks. 

\textbf{1. Symmetric measures.} Recall that a bounded analytic function $f$ in $\D$ is called inner if it has radial limits of modulus $1$ at almost every point of the unit circle. Let $f$ be an inner function. In \cite{AAN} it was shown that $D_h (f) (z) \rightarrow 0$ as $|z| \rightarrow 1$ if and only if $\sigma$ is symmetric, that is,
$$
\lim _{|I| \rightarrow 0} \frac{\sigma(I)}{\sigma\left(I^{\prime}\right)}=1 .
$$
Here $I'$ is an arc contiguous to $I$ of the same length. This result can be understood as an asymptotic version of Theorem \ref{B2weights}.

\textbf{2. Box kernels.} The converse estimate to \eqref{boxest} in (b) holds for any positive measure $\sigma$. Indeed $\sigma (I) / |I| \leq 3 u (z_{I} )$, for any arc $I \subset \partial \mathbb{D}$ and any positive measure $\sigma$ on $\partial \mathbb{D}$. This follows from the fact that 
the Poisson kernel $P_z (\xi)= (1-|z|^2) |\xi - z|^{-2}$, $\xi \in \partial \D$, satisfies $P_z (\xi) \geq 3^{-1} (1-|z|)^{-1} {\mathbb{1}}_{I(z)} (\xi)$, where $I(z) = \{\xi \in \partial \D : |\xi - z/|z|| < 1-|z|\}$. Hence, condition \eqref{boxest} in (b) is equivalent to saying that $\sigma (I) / |I|$ is comparable to $u(z_I)$.
Equivalently, integrating $\sigma$ against the Poisson kernel $P_z $ is (uniformly in $z \in \D$) comparable to integrating against the box kernel, defined as $(1-|z|)^{-1} {\mathbb{1}}_{I(z)}$.   

\textbf{3. Doubling measures.} A positive measure $\sigma$ in $\partial \D$ is called a doubling measure if there exists a constant $C=C(\sigma) >0$ such that $\sigma (2I) \leq C \sigma (I)$ for any arc $I \subset \partial \D$. Here $2I$ refers to the arc on $\partial \D$ that has the same center as $I$ and length $ 2|I|$. The infimum of the constants $C>0$ satisfying the previous condition is called the doubling constant of $\sigma$. Condition (b) of Theorem \ref{B2weights} implies that $\sigma$ is a doubling measure but there are doubling measures which do not satisfy (b). However, if $\sigma$ is a doubling measure with doubling constant sufficiently close to $2$, then $\sigma$ satisfies condition (b). These facts will be discussed in Section \ref{section2} after the proof of Theorem \ref{B2weights}. 

\textbf{4. $B_p$ weights. } It will also be shown in Section \ref{section2} that any positive harmonic function is a $B_{p}$ weight for any $p>2$. On the other hand, there are positive harmonic functions $u$ in $\mathbb{D}$ such that $1 / u $ is not integrable in $\mathbb{D}$ and therefore cannot be  $B_{2}$ weights.

\textbf{5. Uniformity.} Let $\{\sigma_\alpha : \alpha \in \partial \D\}$ be the family of Aleksandrov-Clark measures of an analytic self-mapping of $\D$. As a consequence of Theorem \ref{B2weights}, if for some value $\alpha_0 \in \partial \D$ the measure $\sigma_{\alpha_0}$ verifies the condition in part (b)  of Theorem \ref{B2weights}, then $\sigma_\alpha$ verifies condition (b) for any $\alpha \in \partial \D$. Moreover, there exists a constant $C= C(\alpha_0) >0$ such that $[u_\alpha]_{B_2} \leq C [u_{\alpha_0}]_{B_2}$ for any $\alpha \in \partial \D$, where $u_\alpha$ denotes the Poisson integral of $\sigma_\alpha$. The analogous fact for Muckenhoupt weights was proved by Sarason in \cite{Sar} (see also \cite{AAD}).     

\vspace{0.2cm}

Recall that any bounded analytic function  $f$ in $\D$ factors as $F=EBS$ where $E$ is an outer function, $B$ is a Blaschke product and $S$ is a singular inner function associated to a singular measure on $\partial \D$. See Chapter II of \cite{garnett}. Our second description of contractive analytic mappings is given in terms of the inner-outer factorization, and it is inspired by the characterization of Bishop of bounded analytic functions in the little Bloch space (\cite{Bi}). Although there is no direct connection between Aleksandrov-Clark measures and the inner-outer factorization, the conditions in Theorems \ref{B2weights} and \ref{Zeros} are closely related. 

\begin{theorem}
    \label{Zeros}
    Let $f: \mathbb{D} \rightarrow \mathbb{D}$ be an analytic mapping. Let $\nu$ be the singular measure associated to its singular inner factor. Consider the measure
    $$
    \mu = \sum_{z \in \mathbb{D} : f(z)=0} (1-|z|^2) \delta_{z} + 2 \nu + 2 \log |f|^{-1} dm  
    $$
and its Poisson integral 
$$
P[\mu] (z) = \int_{\overline{\D}} \frac{1-|z|^2}{|1 - \overline{w} z |^2} d \mu (w), \quad z \in \D .
$$
The following conditions are equivalent: 

(a) $f$ is contractive.

(b) There exists a constant $C >0$ such that
$$
\frac{\mu (\overline{Q})}{l (Q)} \geq C P[\mu] (z_Q) \quad \text{ if } \quad P[\mu] (z_Q) \leq  C. 
$$
Here $z_Q = (1- l(Q)) \xi_Q$, where $\xi_Q$ is the center of the arc $\overline{Q} \cap \partial \D$. 
\end{theorem}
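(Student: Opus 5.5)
The plan is to reduce Theorem \ref{Zeros} to the same harmonic-analysis mechanism behind Theorem \ref{B2weights}, applied to the positive superharmonic function $\log|f|^{-1}$ in place of the Poisson integral of an Aleksandrov-Clark measure. By the Riesz decomposition and the canonical factorization, $f=e^{i\theta}e^{-h}$ with $h$ analytic and
$$
\operatorname{Re} h(z)=\log\frac{1}{|f(z)|}=\sum_{f(a)=0}\log\Big|\frac{1-\overline a z}{z-a}\Big| + P[\nu](z)+P[\log|f|^{-1}dm](z).
$$
Since $\tfrac12(1-\rho^2)\le \log(1/\rho)$, with $\log(1/\rho)\le \tfrac12(1-\rho^2)(1+O(1-\rho^2))$ as $\rho\to1$, and $1-\rho(z,a)^2=(1-|z|^2)(1-|a|^2)/|1-\overline a z|^2$, we get $P[\mu](z)\le 2\operatorname{Re}h(z)$. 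Moreover the two sides are comparable, with ratio tending to $1$, at points $z$ with $|f(z)|$ close to $1$, because then all zeros are hyperbolically far from $z$. The factors $2$ in $\mu$ are chosen exactly for this. So the restriction $P[\mu](z_Q)\le C$ in (b) means we are only looking at boxes whose top point satisfies $|f(z_Q)|\ge 1-\delta$.

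\textbf{Step 1 (localization of (a)).} First I would show that $f$ is contractive if and only if $\sup\{D_h(f)(z): |f(z)|\ge 1-\delta\}<1$ for some (any small) $\delta>0$. The nontrivial direction is a normal-families argument. Suppose $D_h(f)(z_n)\to 1$ with $|f(z_n)|\le 1-\delta$. Precompose with automorphisms $\tau_n$ sending $0$ to $z_n$ and pass to a limit $g$ of $f\circ\tau_n$. Then $g$ satisfies $D_h(g)(0)=1$, so $g$ is an automorphism by the Schwarz Lemma. Hence $f$ attains values of modulus arbitrarily close to $1$ at points hyperbolically close to $z_n$, and there $D_h(f)$ is still close to $1$, which contradicts the assumption.

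\textbf{Step 2 (translation to $h$).} In the region $|f|\ge 1-\delta$ we have
$$
D_h(f)(z)=\frac{(1-|z|^2)|h'(z)||f(z)|}{1-|f(z)|^2}\approx \frac{(1-|z|^2)|\nabla \operatorname{Re}h(z)|}{2\operatorname{Re}h(z)}.
$$
Contractivity is therefore equivalent to the gradient estimate $(1-|z|^2)|\nabla U(z)|\le 2D'\,U(z)$ for $U=\operatorname{Re}h$ with some $D'<1$, on the set where $U$ is small. Next I replace $U$ by $P[\mu]$. When $U(z)$ is small, the zeros are hyperbolically far from $z$, so the Green potentials and their gradients agree with their linearizations $\tfrac12(1-\rho^2)$ up to factors $1+o(1)$. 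This gives $U=\tfrac12 P[\mu](1+o(1))$ and $\nabla U=\tfrac12\nabla P[\mu]+o(P[\mu]/(1-|z|))$.

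\textbf{Step 3 (gradient versus box condition).} For a Poisson integral $u=P[\mu]$ of a positive measure on $\overline{\D}$, Harnack's inequality gives $(1-|z|^2)|\nabla u(z)|\le 2u(z)$. The core lemma, which is the same one used for (a)$\Leftrightarrow$(b) in Theorem \ref{B2weights}, is that near-equality at $z_Q$ happens if and only if almost all the harmonic measure of $u$ seen from $z_Q$ comes from a tiny neighborhood of a single boundary point, relative to $l(Q)$. Quantitatively, $\sup (1-|z|^2)|\nabla u|/(2u)<1$ if and only if $\mu(\overline Q)/l(Q)\ge c\,u(z_Q)$ uniformly over boxes. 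One direction goes by contradiction with a rescaling and compactness argument: normalize the box to unit size, take weak limits of the normalized measures, and observe that equality in Harnack forces a point mass at a boundary point, which violates the box lower bound. The other direction is a direct estimate: mass spread over a box of size comparable to $l(Q)$ makes the Poisson kernels point in different directions, which produces a definite cancellation in $\nabla u$.

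The main obstacle is Step 3 together with the error control in Step 2. The box lower bound must survive the $o(1)$ perturbations, and the zeros lie inside $\D$ rather than on $\partial\D$, so the kernel $(1-|w|^2)/|1-\overline w z|^2$ with interior $w$ has to be handled in the compactness argument. Step 2 makes sense of the weight $(1-|a|^2)$ that $\mu$ assigns to a zero $a$: that factor comes from $1-\rho^2=(1-|z|^2)(1-|a|^2)/|1-\overline a z|^2$, so the linearized Green potential of $a$ is $\tfrac12(1-|a|^2)$ times the kernel in $P[\mu]$. I would also verify that a zero close to the top of a box is excluded by the smallness of $P[\mu](z_Q)$, since such a zero makes $\operatorname{Re}h(z_Q)$ large.
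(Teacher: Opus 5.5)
\textbf{Steps 1 and 2.} These are sound and match the paper's scaffolding. Step 1 is Lemma \ref{primer}. You prove it by normal families, while the paper uses the Beardon--Minda estimate and Cauchy estimates; both work. Step 2 is a gradient version of Lemma \ref{lemma3}. Your error term is harmless: for a zero $a$, the gradient of $\log\rho(z,a)^{-2}$ is exactly $\rho(z,a)^{-2}$ times the gradient of $1-\rho(z,a)^2$. The paper avoids even this by using the exact identity for $f'/f$ in Lemma \ref{lemma3}(c).

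\textbf{The gap is in Step 3.} Both arguments you sketch prove the same implication, namely box lower bound $\Rightarrow$ gradient ratio bounded away from $1$, i.e. (b)$\Rightarrow$(a). Your ``direct estimate'' is exactly the paper's cancellation argument. Nothing is offered for (a)$\Rightarrow$(b), and the ``core lemma'' does not supply it. If (b) fails, you get boxes $Q$ with $\mu(\overline Q)\le\varepsilon\, l(Q)\,P[\mu](z_Q)$ and $P[\mu](z_Q)\le\varepsilon$. This only says one region carries little mass, not that the mass seen from $z_Q$ concentrates near a single boundary point. Seen from $z_Q$, the mass outside $\overline Q$ may be spread over the whole complementary arc, and then there is no near-equality in Harnack at $z_Q$.

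\textbf{The missing idea is to move down into the light box.} The paper takes $z_\varepsilon$ on the radius through $z_Q$ with $1-|z_\varepsilon|=\varepsilon^{1/4}l(Q)$. At this point three things hold.
\begin{itemize}
\item[(i)] $\tau_{z_\varepsilon}$ maps $\overline{\D}\setminus\overline Q$ into a shrinking neighbourhood of one point, so the exterior mass produces no cancellation.
\item[(ii)] The interior mass contributes at most $\mu(\overline Q)/(\varepsilon^{1/4}l(Q))\le\varepsilon^{3/4}P[\mu](z_Q)\lesssim\varepsilon^{1/2}P[\mu](z_\varepsilon)$, using Harnack in the last step.
\item[(iii)] $P[\mu](z_\varepsilon)\lesssim\varepsilon^{5/4}$ is still small, so the zeros are hyperbolically far from $z_\varepsilon$ and your Step 2 applies there.
\end{itemize}
Hence $D_h(f)(z_\varepsilon)\to 1$, contradicting (a).

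In general, with relative depth $\delta$, the interior contribution is $\lesssim(\varepsilon/\delta^2)P[\mu](z_\delta)$. You therefore need $\delta\to0$, so that the exterior mass looks like a point mass, and $\varepsilon/\delta^2\to0$, so that the interior mass stays negligible. This balance is what your proposal lacks.

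Equivalently, in your compactness language: the rescaled limit measure at $z_Q$ vanishes on a relative neighbourhood of a boundary arc. You would then have to show that the gradient ratio of its Poisson integral tends to $1$ at points approaching that arc. This is the same deep-point phenomenon, and it is not what you wrote.
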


\vspace{0.2cm}

We now focus on contractive inner functions. It turns out that contractive inner functions can be described in terms of their mapping properties in $\partial \D$. Given a point $z \in \D$ and a measurable set $E \subset \partial \D$ let $\omega (z, E)$ denote the harmonic measure in $\D$ from the point $z$ of the set $E$, given by
\begin{equation}
    \label{harmonicmeasure}
    \omega(z, E)=\int_{E} \frac{1-|z|^{2}}{|\xi-z|^{2}} d m(\xi), \quad z \in \mathbb{D}, \, E \subset \partial \D .
\end{equation}

\begin{theorem}
\label{mixing}
Let $f$ be an inner function. The following conditions are equivalent:

(a) $f$ is contractive, that is,  $D = \sup \{D_h (f) (z) : z \in \D\} < 1$. 

(b) There exists a constant $C>0$ such that
\begin{equation}
    \label{0theoremmixing}
    C^{-1} \omega \left(f\left(z_{I}\right), E\right) \leqslant \frac{m\left(f^{-1}(E) \cap I\right)}{m(I)} \leqslant C \omega \left(f\left(z_{I}\right), E\right), 
\end{equation}
for any measurable set $E \subset \partial \mathbb{D}$ and any arc $I \subset \partial \mathbb{D}$. 
    
\end{theorem}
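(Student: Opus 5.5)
The plan is to reduce both sides of \eqref{0theoremmixing} to integrals over $E$ of the Aleksandrov-Clark data of $f$, and then apply Theorem \ref{B2weights} to each Aleksandrov-Clark measure separately.

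\textbf{The two identities.} For $\alpha\in\partial\D$, let $\sigma_\alpha$ be the Aleksandrov-Clark measures of $f$ and $u_\alpha$ their Poisson integrals. Integrating \eqref{AC11} over $\alpha\in E$ gives
$$
\omega(f(z),E)=\int_E \frac{1-|f(z)|^2}{|\alpha-f(z)|^2}\,dm(\alpha)=\int_E u_\alpha(z)\,dm(\alpha).
$$
On the other side I will use Aleksandrov's disintegration theorem: $\int_{\partial\D}\sigma_\alpha(A)\,dm(\alpha)=m(A)$ for Borel $A\subset\partial\D$. Since $f$ is inner, $\sigma_\alpha$ is concentrated on $f^{-1}(\{\alpha\})$. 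Taking $A=f^{-1}(E)\cap I$ therefore gives
$$
m(f^{-1}(E)\cap I)=\int_E \sigma_\alpha(I)\,dm(\alpha).
$$
Hence \eqref{0theoremmixing} states that $\int_E \sigma_\alpha(I)/|I|\,dm(\alpha)$ is comparable to $\int_E u_\alpha(z_I)\,dm(\alpha)$, uniformly in $E$ and $I$.

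\textbf{(a)$\Rightarrow$(b).} The upper bound holds for any $f$, by Remark 2: $\sigma_\alpha(I)/|I|\le 3u_\alpha(z_I)$. For the lower bound I need condition (b) of Theorem \ref{B2weights} for every $\sigma_\alpha$ with a constant $c$ independent of $\alpha$. To get this I use conformal invariance. The measure $\sigma_\alpha(f)$ coincides with $\sigma_1(\overline{\alpha}f)$, and $D(\overline{\alpha}f)=D(f)$. So it suffices to check that the proof of (a)$\Rightarrow$(b) in Theorem \ref{B2weights} produces a constant $c$ depending only on $D(f)$. This can be arranged, or it can be taken from the uniformity in Remark 5. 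Integrating $\sigma_\alpha(I)/|I|\ge c\,u_\alpha(z_I)$ over $E$ then gives the left inequality with $C=\max(3,c^{-1})$.

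\textbf{(b)$\Rightarrow$(a).} Fix an arc $I$. Both $\alpha\mapsto\sigma_\alpha(I)$ and $\alpha\mapsto u_\alpha(z_I)$ are integrable in $\alpha$; the second is bounded for fixed $z_I$. Apply the left inequality of \eqref{0theoremmixing} to small arcs $E$ shrinking to $\alpha$, divide by $m(E)$, and use Lebesgue differentiation. This gives $\sigma_\alpha(I)/|I|\ge C^{-1}u_\alpha(z_I)$ for almost every $\alpha$.

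The exceptional null set depends on $I$. To handle this I restrict to the countable family of arcs with rational endpoints and lengths, and obtain a single $\alpha_0$ for which the inequality holds for all of them. A general arc $I$ contains a rational arc $J$ with $|J|\ge|I|/2$ and $|z_I-z_J|$ comparable to $1-|z_I|$. Harnack's inequality for $u_{\alpha_0}$ gives $u_{\alpha_0}(z_J)\gtrsim u_{\alpha_0}(z_I)$, and since $\sigma_{\alpha_0}(I)\ge\sigma_{\alpha_0}(J)$, condition (b) of Theorem \ref{B2weights} holds for $\sigma_{\alpha_0}$ with a comparable constant. Theorem \ref{B2weights} then shows that $f$ is contractive.

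The main obstacle I expect is the uniformity in $\alpha$ of the constant in Theorem \ref{B2weights} needed for (a)$\Rightarrow$(b). If the constant is not visibly a function of $D(f)$ alone, I would go back into that proof and track the dependence through the estimate relating $D_h(f)$ to the local behaviour of $\sigma_\alpha$. The measure-theoretic limiting step in (b)$\Rightarrow$(a) is routine by comparison.
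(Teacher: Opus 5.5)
Your proposal is correct. The direction (a)$\Rightarrow$(b) is the paper's argument: the two integral identities together with the pointwise comparison from Theorem \ref{B2weights}, integrated over $E$. You are in fact more careful than the paper, which takes the uniformity of the constant in $\alpha$ for granted. Your justification is sound: you use $\sigma_\alpha(f)=\sigma_1(\overline{\alpha}f)$, and the choice $\delta=\varepsilon^{1/4}$ in the proof of Theorem \ref{B2weights} bounds $\varepsilon$ below in terms of $D(f)$ only.

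For (b)$\Rightarrow$(a) your route genuinely differs. The paper does not return to Aleksandrov-Clark measures there. It fixes $z$ and normalizes to $g=\tau_{f(z)}\circ f\circ\tau_z$. It notes that \eqref{0theoremmixing} gives $m(g^{-1}(J)\cap I)\simeq 1$ for arcs $I,J$ of length comparable to $1$. It then concludes $|g'(0)|=|\int_{\partial\D} g(\xi)\overline{\xi}\,dm(\xi)|\le c<1$, since $g(\xi)$ cannot stay near $\lambda\xi$ on most of the circle.

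You instead differentiate the lower inequality in $E$ at Lebesgue points of $\alpha\mapsto\sigma_\alpha(I)$. You then select one good $\alpha_0$ using the countable family of rational arcs and Harnack's inequality, and invoke (b)$\Rightarrow$(a) of Theorem \ref{B2weights}.

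The paper's argument is self-contained and local. It bounds $D_h(f)(z)$ using (b) only for macroscopic sets $E$ and arcs at the scale of $z$, with no selection step. Yours uses (b) only for arbitrarily small arcs $E$. It makes transparent that the mixing condition is exactly the $dm(\alpha)$-average of condition (b) of Theorem \ref{B2weights}, and hence already forces that condition for almost every individual Aleksandrov-Clark measure. The price is that this direction now depends on the disintegration theorem and on the cancellation argument of Theorem \ref{B2weights}.
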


We make the following remarks. 

\textbf{1. The mixing property.} The second inequality in \eqref{0theoremmixing} is always fulfilled, that is, there exists a universal constant $C>0$ such that for any inner function $f$, any arc $I \subset \partial \D$ and any measurable set $E \subset \partial \D$, we have $
m (f^{-1}  (E)  \cap I ) \leq C \omega (f (z_{I}), E) m (I)$. This follows from the classical Loewner Lemma which says that $\omega (f(z), E) = \omega (z, f^{-1} (E))$ for any $z \in \D$, any measurable set $E \subset \partial \D$ and any inner function $f$ (see page 12 of \cite{Ahl}), and the estimate $P_{z_I} (\xi) \geq 3^{-1} {\mathbb{1}}_{I} (\xi) / m(I)$, $\xi \in \partial \D$. So, Theorem \ref{mixing} says that the first inequality in \eqref{0theoremmixing} holds if and only if $f$ is contractive. 

\textbf{2. Dependence of the constants.} Assume that (a) holds and let $C=C(D)$ be the constant appearing in part (b). The proof of Theorem \ref{mixing} shows that the constant $C=C(D)$ satisfies $ C(D) \longrightarrow 1$ as $D \rightarrow 0 $. The proof also provides local versions of the result. 

\vspace{0.2cm}

A measurable set $E \subset \partial \D$ is called a $B_2$ set if $\omega(z, E)$, as defined in \eqref{harmonicmeasure}, is a $B_2$ weight. By Theorem \ref{B2weights}, this is equivalent to the existence of a constant $C>0$ such that 
$$
\frac{m(I \cap E)}{m(I)} \geq C \omega (z_I, E),
$$
for any arc $I \subset \partial \D$. Note that by Remark 3 after Theorem \ref{B2weights}, if $E$ is a $B_2$ set,  the restriction of the Lebesgue measure to $E$ is a doubling measure and hence $B_2$ sets of positive measure must be well spread out over $\partial \D$. It turns out that there exist $B_2$ sets $E\subset \partial \D$ with $0<m(E) < 1$ and actually, contractive inner functions provide non-trivial examples of $B_2$ sets. It is convenient to mention that by Lowner's Lemma,  for any inner function $f$ we have $0< m(f^{-1} (E)) < 1$ for any measurable set $E \subset \partial \D$ with $0< m(E) < 1$. 

\begin{cor}\label{cor1}
Let $f$ be a contractive inner function. Then $f^{-1}(E)$ is a $B_2$ set for any measurable set $E \subset \partial \mathbb{D}$. 
\end{cor}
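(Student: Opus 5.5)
The plan is to combine Theorem \ref{mixing} with Löwner's Lemma and then read off the characterization of $B_2$ sets that Theorem \ref{B2weights} provides.

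Fix a measurable set $E \subset \partial \D$ and put $F = f^{-1}(E)$. If $m(E)=0$ then $m(F)=0$, and the claim is vacuous or trivial depending on convention. So assume $m(E)>0$. Then $\omega(z,F) > 0$ on $\D$.

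\textbf{Step 1.} Since $\omega(\cdot,F)$ is the Poisson integral of the measure $\sigma = \mathbb{1}_F \, dm$, I would first check that Theorem \ref{B2weights}, condition (b) $\Leftrightarrow$ (c), applies to it. That theorem is stated for Aleksandrov--Clark measures, so I need $\mathbb{1}_F\,dm$ to be one. Choose $g$ with $(1+g)/(1-g) = \int \frac{\xi+z}{\xi-z}\,\mathbb{1}_F(\xi)\,dm(\xi)$. This is an analytic self-map of $\D$ whose Aleksandrov--Clark measure at $\alpha=1$ is exactly $\mathbb{1}_F dm$. Any positive finite measure arises this way, which is also the convention used in the paragraph defining $B_2$ sets. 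Hence $F$ is a $B_2$ set as soon as
$$
\frac{m(I\cap F)}{m(I)} \geq c\,\omega(z_I,F)
$$
for every arc $I\subset\partial\D$, with a fixed constant $c>0$.

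\textbf{Step 2.} Since $f$ is a contractive inner function, the first inequality in \eqref{0theoremmixing} of Theorem \ref{mixing} gives
$$
\frac{m(f^{-1}(E)\cap I)}{m(I)} \geq C^{-1}\,\omega(f(z_I),E).
$$
Löwner's Lemma gives $\omega(f(z_I),E)=\omega(z_I,f^{-1}(E))=\omega(z_I,F)$. Substituting, we obtain exactly the estimate of Step 1 with $c=C^{-1}$, independent of $I$ and of $E$. Therefore $F$ is a $B_2$ set.

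The only point needing care is Step 1, namely the legitimacy of applying Theorem \ref{B2weights} to an arbitrary positive measure rather than to an Aleksandrov--Clark measure of the given $f$. This is handled by the Herglotz representation above. The rest is a direct substitution.
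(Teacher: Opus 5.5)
Your proof is correct and follows the paper's argument: the paper's one-line proof also combines the lower bound in part (b) of Theorem \ref{mixing} with L\"owner's Lemma $\omega(f(z_I),E)=\omega(z_I,f^{-1}(E))$ and part (b) of Theorem \ref{B2weights}. Your Herglotz construction, showing that the restriction of $dm$ to $F$ is an Aleksandrov--Clark measure of some self-map, makes explicit a step the paper takes for granted when it defines $B_2$ sets.
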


A related result was proved in \cite{AAN}.  More concretely, assume $D_h (f) (z)$ tends to $0$ as $|z| \to 1$. Then for any measurable set $E \subset \partial \mathbb{D}$, the restriction of Lebesgue measure to $f^{-1} (E)$ is a symmetric measure, that is,  
$$
\frac{m\left(f^{-1}(E) \cap I\right)}{m\left(f^{-1}(E) \cap I^{\prime}\right)} \xrightarrow[|I| \rightarrow 0]{ } 1 .
$$
Here $I'$ denotes an arc contiguous to $I$ of the same length. 

\vspace{0.2cm}

Our three main results are now applied to obtain fine properties of the boundary behavior of contractive inner functions. For $0<s<1$ let $M^{s}(E)$ denote the $s$-dimensional Hausdorff content of a set $E \subset \partial \mathbb{D}$ and let $ \operatorname{dim} (E)$ denote its Hausdorff dimension. Fern\'andez and Pestana proved that for any $0<s<1$ and for any inner function $f$ there exists a constant $C=C(s,f(0))$ such that $M^s (f^{-1} (E)) \geq C M^s (E)$ for any measurable set $E \subset \partial \D$. Therefore $\operatorname{dim} (f^{-1} (E) \geq \operatorname{dim} (E)$, for any measurable set $E \subset \partial \D$ and any inner function $f$. See \cite{FP}. Contractive inner functions have the following extreme behavior.    

\begin{theorem}
\label{preimage}
Let $f$ be a contractive inner function with
$
D=\sup \{ D_h (f) (z) : z \in  \mathbb{D}\} <1 .
$
Then $M^{1-D} ( f^{-1} ( \{\alpha \})  \cap I)  >0$ for any point  $\alpha \in \partial \mathbb{D}$ and any arc $I \subset \partial \mathbb{D}$.
\end{theorem}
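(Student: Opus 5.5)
We plan to prove Theorem \ref{preimage} with a Frostman-type argument applied to the Aleksandrov-Clark measure $\sigma=\sigma_\alpha$ of $f$. Since $f$ is inner, $\sigma$ is singular and concentrated on $f^{-1}(\{\alpha\})$, understood as the set of points where the radial limit of $f$ equals $\alpha$. By the mass distribution principle it is enough to show two things. First, $\sigma(I)>0$ for every arc $I$. Second, there is a constant $C$ with $\sigma(J)\le C|J|^{1-D}$ for every subarc $J\subset I$. Once both hold, for any cover $\{J_k\}$ of $f^{-1}(\{\alpha\})\cap I$ we get $\sum |J_k|^{1-D}\ge C^{-1}\sigma(f^{-1}(\{\alpha\})\cap I)=C^{-1}\sigma(I)>0$, which gives $M^{1-D}(f^{-1}(\{\alpha\})\cap I)>0$.

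\textbf{Positivity.} This follows from Theorem \ref{B2weights}(b). Since $f$ is contractive, $\sigma(I)/|I|\ge c\,u(z_I)$. Moreover $u(z_I)=(1-|f(z_I)|^2)/|\alpha-f(z_I)|^2>0$, because $f(z_I)\in\D$.

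\textbf{Growth bound.} By Remark 2 after Theorem \ref{B2weights}, $\sigma(J)\le 3|J|\,u(z_J)$. So it suffices to show
$$u(z)\lesssim (1-|z|)^{-D}$$
for $z$ in a fixed Carleson box, or even on all of $\D$. Write $u(z)=(1-|f(z)|^2)/|\alpha-f(z)|^2$. Elementary estimates give $u(z)\le 4/(1-|f(z)|^2)\lesssim e^{2d_h(f(z),0)}$. We also have $d_h(0,z)\approx \tfrac12\log\frac{1}{1-|z|}$, up to an additive constant. The contraction property then gives
$$d_h(f(z),0)\le d_h(f(z),f(0))+d_h(f(0),0)\le D\,d_h(z,0)+C_0.$$
Hence $1/(1-|f(z)|)\lesssim (1-|z|)^{-D}$, and therefore $u(z)\lesssim (1-|z|)^{-D}$. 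Putting these together, $\sigma(J)\lesssim |J|\cdot|J|^{-D}=|J|^{1-D}$, where the constant depends only on $f(0)$, $\alpha$ and $D$.

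\textbf{Main obstacle.} The delicate point is the normalization of $d_h$ used in the paper. With $d_h=\tfrac12\log\frac{1+\rho^2}{1-\rho^2}$ one gets $d_h(0,z)=\tfrac12\log\frac{1+|z|^2}{1-|z|^2}$, so $e^{2d_h(0,z)}\asymp (1-|z|)^{-1}$. This is exactly the scaling that produces the exponent $D$, and it must be checked carefully, since another normalization would change the exponent. A further subtlety is that the bound $u\le 4/(1-|f|^2)$ is crude near $\alpha$, but only an upper bound is needed, so this causes no trouble. Finally, we must make sure $\sigma$ charges only the set where the radial limit of $f$ is $\alpha$. This is the standard fact quoted in the introduction, and it lets us conclude that $f^{-1}(\{\alpha\})\cap I$ carries the full mass $\sigma(I)$.
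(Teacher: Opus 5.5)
Your proof is correct and is essentially the paper's argument: a mass distribution (Frostman) principle for $\sigma_\alpha$, built from $\sigma_\alpha(J)\le 3|J|\,u(z_J)\lesssim |J|^{1-D}$, the positivity of $\sigma_\alpha(I)$, and the fact that $\sigma_\alpha$ is concentrated on $f^{-1}(\{\alpha\})$. The differences are minor: the paper gets $u(z)\lesssim u(0)(1-|z|)^{-D}$ by integrating $|\nabla u|/u\le 2D/(1-|z|^2)$ (identity \eqref{tag1}) along a radius, whereas you integrate $D_h(f)\le D$ for $f$ itself and use $u\le 4/(1-|f|^2)$; the paper gets $\sigma_\alpha(I)>0$ because $f$ does not extend analytically across $I$, whereas you take it from Theorem \ref{B2weights}(b); and your worry about the normalization of $d_h$ is unnecessary, since rescaling $d_h$ changes neither the Lipschitz constant $D$ nor the resulting exponent.
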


Note that an inner function $f$ which extends analytically across an arc $I \subset \partial \D$ satisfies $D_h (f) (z)$ tends to $1$, as $z \in \D$ approaches any point of $I$. Hence, contractive inner functions cannot be analytically extended across any arc of the unit circle. Consequently, the set of singularities of a contractive inner function $f$ is the whole unit circle. Next, we will study the size of the set 
$$
E(f)= \{ \xi \in \partial \mathbb{D} : \limsup_{r \rightarrow 1}|f(r \xi)|<1 \}. 
$$
Since $f$ is inner, $m(E(f))=0$. However, we will show that if $f$ is a contractive inner function, then $\operatorname{dim}(E(f)) >0$. 


 \begin{theorem}
 \label{limsup}
Let $f$ be an inner function such that
$$
\sup \{D_h (f) (z) : z  \in \mathbb{D} \}<D<1 .
$$
Then there exists $0<t=t(D)<1$ such that
$$
\operatorname{dim}\left\{\xi \in I: \limsup_{r \rightarrow 1}|f(r\xi)| < t \right\}>1-D,
$$
for any arc $I \subset \partial \mathbb{D}$. In addition, $t(D) \rightarrow 0$ as $D \rightarrow 0$.
 \end{theorem}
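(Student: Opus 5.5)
We will build a Cantor-type subset of $I$ on which $|f(r\xi)|$ stays below $t$ along the radius, and estimate its dimension with a mass distribution argument. The basic tool is the contraction: $d_h(f(z),f(w)) \le D\, d_h(z,w)$. By conformal invariance we first compose with an automorphism so that $f$ has zeros or small values where needed. If $|f(z_0)|$ is small and $z$ lies within hyperbolic distance $R$ of $z_0$, then $d_h(f(z),f(z_0)) \le DR$, and $|f(z)|$ stays below a threshold depending only on $DR$.

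\textbf{Step 1 (small values are dense at every scale).} For every arc $J$, the top half of the Carleson box $Q(J)$ should contain a point $w$ with $|f(w)| \le t_0$, with $t_0=t_0(D)<1$. To see this, apply Theorem \ref{mixing} to $J$ with $E$ an arc around $-f(z_J)/|f(z_J)|$ of fixed size. This gives $m(f^{-1}(E)\cap J) \gtrsim m(J)\,\omega(f(z_J),E)$. Combined with the $B_2$ property (Corollary \ref{cor1}) and Remark 2, the measure estimates force the boundary values of $f$ on $J$ to spread over a large portion of the circle. By harmonicity, some point at scale comparable to $|J|$ then has $|f|$ bounded away from $1$. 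More directly, the Schwarz--Pick bound with $D<1$ gives growth $1-|f(z)| \ge c(1-|z|)^D$ along radii toward generic points. Since $f$ is inner, $1-|f(r\xi)| \to 0$ a.e., so $f$ cannot be close to unimodular on a whole Whitney layer. Iterating contraction downward from a point with $|f|\le t_0$ shows that $|f|$ stays below $t_1$ on a hyperbolic neighborhood of radius $\rho(D)$.

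\textbf{Step 2 (Cantor construction).} Fix a large integer $N$. Inside $J$, look at the $N$ subarcs of length $|J|/N$. Using Step 1 at scale $|J|/N$ and the contraction estimate, connect the current small-value point $w_J$ to the small-value points $w_{J'}$ in the children by hyperbolic geodesics of length about $\log N$. Along each of these, $d_h(f(\cdot), f(w_J)) \le D \log N + O(1)$, which is too weak for a uniform bound. So we select children more carefully. Starting from $f(w_J)$ near $0$, use the mixing property (Theorem \ref{mixing}) with $E$ an arc near $f(w_J)$'s ``opposite'' to count how many children $J'$ have $w_{J'}$ with $|f(w_{J'})| \le t$. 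The proportion should be at least $N^{-D}$ up to constants: the hyperbolic derivative bound makes $1-|f|^2$ scale like $(1-|z|)^{D}$ in the worst case. Hence at least $c N^{1-D}$ good children survive per generation, with the whole radial segment above each staying in $\{|f|<t\}$.

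\textbf{Step 3 (dimension).} The limit set is contained in $\{\xi\in I : \limsup_{r \to 1}|f(r\xi)|<t\}$. It has at least $cN^{1-D}$ children per parent at ratio $1/N$, so its dimension is at least $\frac{\log(cN^{1-D})}{\log N}$. To get strict inequality $>1-D$, use that $\sup D_h(f)<D$ strictly. Replacing $D$ by some $D'<D$ with $\sup D_h(f)\le D'$, the count is $cN^{1-D'}$, and choosing $N$ large gives dimension $>1-D$. As $D\to 0$, the contraction forces $f$ to be nearly constant on large hyperbolic balls, and the constants in Theorem \ref{mixing} tend to $1$ (Remark 2). So $t$ can be taken to tend to $0$.

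The main obstacle is Step 2: obtaining the count $N^{1-D'}$ of good children with uniform control of $|f|$ along the entire radius, not just at the endpoints. I would attempt it by applying Theorem \ref{mixing} to the conformally renormalized map $\tau_{f(w_J)}\circ f\circ\tau_{w_J}$, with $E$ a small arc. I would then use Theorem \ref{preimage}-style Hausdorff-content estimates, which give $M^{1-D}$ positivity, to convert measure information into counting at scale $1/N$.
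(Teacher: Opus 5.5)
Your Step~1 is false as stated, and Step~2, which carries the whole argument, is only asserted, so the proposal does not prove the theorem.

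\textbf{Step~1 fails.} For almost every $\xi\in\T$ the radial limit of $f$ is unimodular. Take $J$ centered at such a $\xi$; then $|f(z_J)|\to 1$ as $|J|\to 0$. The top part $\{z\in Q(J): 1-|z|\ge |J|/2\}$ has bounded hyperbolic diameter, so Schwarz--Pick forces $|f|$ to be uniformly close to $1$ there. Small values of $|f|$ do occur somewhere in $Q(J)$, but only at a depth governed by how close $|f(z_J)|$ is to $1$, and that depth is exactly what must be quantified. Your supporting arguments do not reach it. The bound $1-|f(z)|\gtrsim(1-|z|)^D$ is a lower bound that tends to zero, and $|f(r\xi)|\to 1$ a.e.\ pushes in the wrong direction.

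\textbf{Consequence for Step~2.} Without Step~1 there is no uniform $N$-adic tree with a small-value point in each child. The proportion $N^{-D}$ of good children, together with $|f|<t$ along the \emph{whole} radial segment, is only stated as what ``should be''. Theorem~\ref{mixing} describes how boundary values are distributed on a given arc. It neither locates small values of $|f|$ at a prescribed scale nor controls $|f|$ along radii. Propagating smallness by contraction costs $D\log N$, as you observed. Your bookkeeping ($N^{1-D}$ children at ratio $1/N$) is the right heuristic, but the mechanism that produces it is missing.

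\textbf{The missing idea.} Work with the measure $\mu$ of Theorem~\ref{Zeros}, and use $\log|f|^{-2}\ge P[\mu]$ (Lemma~\ref{lemma3}(a)) to turn lower bounds on $P[\mu]$ into $|f|\le t$. Then run stopping times at two levels $K_1\ll K$.
\begin{itemize}
\item Start from an arc $I^{(0)}$ with $P[\mu](z_{I^{(0)}})\simeq K_1$. Take the maximal dyadic subarcs $I_j$ with $P[\mu](z_{I_j})\ge K$.
\item No zeros and no singular mass escape this stopping, so $\mu(\overline{Q(I^{(0)})})=\sum_j\mu(\overline{Q(I_j)})$.
\item Theorem~\ref{Zeros}(b) at $I^{(0)}$ gives $\mu(\overline{Q(I^{(0)})})\gtrsim K_1|I^{(0)}|$. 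Combined with $\mu(\overline{Q(I_j)})\lesssim K|I_j|$, this yields $\sum|I_j|\gtrsim (K_1/K)|I^{(0)}|$. This is your proportion $N^{-D}$.
\item Apart from its role through Theorem~\ref{Zeros}(b), contraction is used only for scale separation. Since $1-|f|^2\simeq P[\mu]$ at both $z_{I^{(0)}}$ and $z_{I_j}$, one gets $|I_j|\lesssim (K_1/K)^{1/D_1}|I^{(0)}|$ with $D_1<D$.
\item Uniform radial control comes from Lemma~\ref{Lemma1.5}, not from propagating smallness. Every dyadic $L$ containing a selected arc has a fixed density of arcs $I_j$, each with $\mu(\overline{Q(I_j)})\ge CK|I_j|$. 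Hence $P[\mu](z_L)$ is bounded below.
\item Inside each selected arc, stop downward at the maximal arcs where $P[\mu]\le K_1$. These cover almost all of the arc because $P[\mu]\to0$ radially a.e.\ ($\mu$ is singular on $\T$). Above them, $P[\mu]\ge K_1/2$.
\end{itemize}
Iterate this construction and apply Lemma~\ref{hungerford} with proportion $\eta\simeq K_1/K$ and ratio $\varepsilon\simeq (K_1/K)^{1/D_1}$. This gives $\operatorname{dim}\ge 1-\log\eta/\log\varepsilon$, which tends to $1-D_1>1-D$ as $K_1\to 0$.
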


Finally, we present some natural examples of contractive analytic selfmappings. A domain $\Omega \subset  \mathbb{D}$ will be called a contractive domain if any analytic mapping $f: \D \rightarrow \Omega$ satisfies $\sup \{D_h (f) (z) : z \in \D\} < 1$. We will show that a domain $\Omega \subset \D$ is contractive if and only if there exists a constant $R=R(\Omega) >0$ such that $\Omega$ does not contain any hyperbolic disc of hyperbolic radius $R$. This is analogous to the well known description of Bloch domains from \cite{ACP}. Contractive analytic mappings also appear naturally when studying composition operators in the classical Bloch space. We will also give other natural examples of inner functions and discuss when they are contractive and of bounded compression, which were considered in \cite{ivriinicolau2025}. 

This paper is organized as follows. Theorem \ref{B2weights} and the corresponding remarks are discussed  in Section \ref{section2}. Section \ref{section3} is devoted to the proofs of Theorems \ref{Zeros} and \ref{mixing}. Theorems \ref{preimage} and \ref{limsup} are proved in Section \ref{section4}. Finally  Section \ref{section5} contains the description of contractive domains, composition operators with contractive symbols and other natural examples. 

We use the standard notation $A \gtrsim B$ (respectively, $A \lesssim B$) if there exists an absolute constant $C>0$ such that $A \geq CB$ (respectively, $A \leq CB$). We also use the notation $ A \simeq B$ if $A\gtrsim B$ and $A \lesssim B$.

It is a pleasure to thank Alex Bergman for pointing my attention to \cite{Sar} and \cite{AAD} and to Oleg Ivrii for many interesting discussions.

\section{Proof of Theorem \ref{B2weights}}\label{section2}

Given an arc $I \subset \partial \D$ centered at $\xi_I$, recall the notations $z_I = (1- |I|)\xi_I$ and $Q(I)= \{z \in \D: 1-|z| \leq |I|, \, z/|z| \in I \}$. We also denote by $T(Q) = \{z \in Q: 1- |z| \geq l(Q) /2 \}$ the top part of $Q$. Harnack's inequality provides a universal constant $C>0$ such that $C^{-1} u(w) \leq u(z) \leq C u(w)$ for any pair of points $z,w \in T(Q)$, any Carleson box $Q$ and any positive harmonic function $u$ in $\D$. We start with an elementary auxiliary result. 

\begin{lem}
    \label{meanvalue}
There exists a universal constant $C>0$ such that 
$$
C^{-1} u \left(z_{I}\right) \leqslant \frac{1}{A(Q(I))} \int_{Q(I)} u d A \leqslant C u\left(z_{I}\right), 
$$
for any arc $I \subset \partial \mathbb{D}$ and any positive harmonic function $u$ in $\mathbb{D}$. 
    
\end{lem}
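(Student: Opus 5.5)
The lower bound follows from Harnack's inequality on the top part $T(Q)$; the upper bound needs more work, and I will get it by representing $u$ as a Poisson integral and estimating the kernel averaged over $Q$.

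\emph{Lower bound.} Write $u = P[\sigma]$ for a positive measure $\sigma$ on $\T$, and let $Q = Q(I)$. The top part $T(Q)$ has area comparable to $A(Q) \simeq |I|^2$ (for $|I|$ small; if $|I|$ is comparable to $1$, $Q$ is essentially the disc and the claim is just the mean value property together with Harnack near the origin). Since $z_I \in T(Q)$, Harnack's inequality gives $u \gtrsim u(z_I)$ on $T(Q)$. Hence
$$
\frac{1}{A(Q)} \int_Q u \, dA \geq \frac{1}{A(Q)} \int_{T(Q)} u \, dA \gtrsim u(z_I).
$$

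\emph{Upper bound.} By Fubini,
$$
\frac{1}{A(Q)} \int_Q u \, dA = \int_{\T} K(\xi) \, d\sigma(\xi), \qquad K(\xi) = \frac{1}{A(Q)} \int_Q P_z(\xi) \, dA(z).
$$
It therefore suffices to prove the pointwise kernel comparison $K(\xi) \lesssim P_{z_I}(\xi)$ for every $\xi \in \T$. We have $P_{z_I}(\xi) \simeq |I| / (|I| + |\xi - \xi_I|)^2$. I split into two cases.

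If $\xi$ is far from $I$, say $|\xi - \xi_I| \geq 2|I|$, then for every $z \in Q$ we have $|\xi - z| \simeq |\xi - \xi_I|$ and $1 - |z|^2 \lesssim |I|$. So $P_z(\xi) \lesssim |I| / |\xi - \xi_I|^2 \simeq P_{z_I}(\xi)$, and averaging over $Q$ gives the bound.

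If $|\xi - \xi_I| < 2|I|$, I need $K(\xi) \lesssim 1/|I|$. Use $P_z(\xi) \lesssim (1-|z|) / \bigl((1-|z|)^2 + |\arg z - \arg \xi|^2\bigr)$ and integrate in polar coordinates with $t = 1 - |z|$ and $\theta = \arg z$. For fixed $t$, the $\theta$-integral of the Poisson kernel over $\T$ is $\lesssim 1$. Hence
$$
\int_Q P_z(\xi) \, dA(z) \lesssim \int_0^{|I|} dt = |I|,
$$
and dividing by $A(Q) \simeq |I|^2$ gives $K(\xi) \lesssim 1/|I| \simeq P_{z_I}(\xi)$.

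The step I expect to be the main obstacle is this near case: naively the Poisson kernel blows up near $\xi$, and the point is that its average over $Q$ stays bounded because each circle of radius $1-t$ integrates it to at most $1$. The remaining work is bookkeeping of constants and the large-arc case $|I| \gtrsim 1$.
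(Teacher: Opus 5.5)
Your proof is correct. The lower bound is essentially the paper's (Harnack on a region of area $\simeq|I|^2$ around $z_I$), but the upper bound takes a different route.

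The paper never uses the Herglotz representation. It slices $Q(I)$ into the arcs $\{r\xi:\xi\in I\}$ and uses $1/|I|\lesssim P_{z_I}(\xi)$ for $\xi\in I$. Then harmonicity of the dilate $u_r$ gives $\frac{1}{|I|}\int_I u(r\xi)\,dm(\xi)\lesssim\int_{\partial\D}u(r\xi)P_{z_I}(\xi)\,dm(\xi)=u(rz_I)$. It finishes by integrating over $1-|I|<r<1$ and applying Harnack along the radial segment $\{rz_I\}$, which stays at bounded hyperbolic distance from $z_I$.

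You instead write $u=P[\sigma]$, apply Tonelli, and prove a kernel bound that does not involve $u$: $\frac{1}{A(Q)}\int_Q P_z(\xi)\,dA(z)\lesssim P_{z_I}(\xi)$. You do this by a far/near split, with the near case resting on $\frac{1}{2\pi}\int_0^{2\pi}P_{\rho e^{i\theta}}(\xi)\,d\theta=1$. In fact that identity gives exactly $\int_{\{1-|z|\le|I|\}}P_z(\xi)\,dA(z)=|I|(2-|I|)\le 2|I|$ for every $\xi$. The paper's use of the reproducing identity for $u_r$ lets it avoid any case analysis. Your version isolates a pointwise statement about kernels with fully explicit constants, at the cost of the Herglotz representation and the split.

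One bookkeeping fix is needed. With the paper's normalization $|I|=m(I)$, the arc $I$ has Euclidean half-width about $\pi|I|$. So the condition $|\xi-\xi_I|\ge 2|I|$ does not exclude points $\xi\in I$, and for those the claim that $|\xi-z|\simeq|\xi-\xi_I|$ for every $z\in Q$ fails, since $z$ can approach $\xi$. Use the threshold $|\xi-\xi_I|\ge 2(1+\pi)|I|$ instead. Then $|z-\xi_I|\le(1-|z|)+\pi|I|\le(1+\pi)|I|$ for $z\in Q$ gives $|\xi-z|\ge|\xi-\xi_I|/2$, and the far case goes through. The near case is unaffected, because $P_{z_I}(\xi)\ge |I|/(|I|+|\xi-\xi_I|)^2$ is still $\gtrsim 1/|I|$ whenever $|\xi-\xi_I|<2(1+\pi)|I|$.

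Incidentally, the caveat ``for $|I|$ small'' in your lower bound is unnecessary. One has $A(T(Q))=|I|^2(1-\tfrac34|I|)\ge\tfrac14|I|^2$ and $A(Q)=|I|^2(2-|I|)\le 2|I|^2$ for all $|I|\le 1$.
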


\begin{proof}
 The lower bound follows 
from the mean value property and Harnack's inequality. 
For the upper bound, we use that
$$
\frac{1}{|I|} \lesssim  \frac{|I|}{\left|\xi-z_{I}\right|^{2}} , \quad  \xi \in I ,
$$
which gives
$$
\frac{1}{|I|} \int_{I} u(r \xi) dm (\xi) \lesssim \int_{I} u( r \xi) \frac{|I|}{\left|\xi-z_{I}\right|^{2}} d m(\xi) \leq u\left(r z_{I}\right), \quad  0<r<1 .
$$
Applying Harnack's inequality one obtains
$$
\frac{1}{|I|^{2}} \int_{Q(I)} u d A \lesssim \frac{1}{|I|} \int_{1-|I|}^{1} u\left(r z_{I}\right) d r \lesssim u\left(z_{I}\right), 
$$
as desired.
\end{proof}

Let $f:\D \rightarrow \D$ be an analytic mapping and let $u$ be the Poisson integral of an Aleksandrov-Clark measure of $f$. We will use the following identities which follow from definition \eqref{AC} by a straightforward computation,
\begin{equation}
    \label{tag1}
    D_h (f) (z) = \frac{\left(1-|z|^{2}\right)\left|f^{\prime}(z)\right|}{1-|f(z)|^{2}}=\frac{\left(1-|z|^2 \right) |\nabla u(z)|}{2 u(z)}, \quad z \in \mathbb{D} ,
\end{equation}
and
\begin{equation}
    \label{tag2}
    \left(1-|z|^{2}\right)|\nabla u(z)|=2 \left|\int_{\partial \mathbb{D}} \frac{1-|z|^{2}}{|\xi-z|^{2} \tau_z (\xi)}  d \sigma (\xi) \right|, 
\end{equation}
where $\tau_z(\xi) = (\xi -z)/(1-\bar{z} \xi)$, $\xi \in \partial \D$. These identities have recently been used in \cite{APHA} and \cite{ivriinicolau2025}.

\medskip

We are now ready to prove Theorem \ref{B2weights}.

\begin{proof}[Proof of Theorem \ref{B2weights}.]

$(a) \Rightarrow(b)$. Fix $\varepsilon >0$. Assume $I \subset \partial  \mathbb{D}$ is an arc with 
$$
\frac{\sigma(I)}{ |I|} \leq \varepsilon u \left(z_{I}\right) . 
$$ 
We want to show that $\varepsilon$ is bounded below if $f$ is contractive. Let $0<\delta=\delta(\varepsilon)<1$ be a constant to be fixed later and let $\delta I $ denote the arc that has the same center as $I$ and length $\delta |I|$. By \eqref{tag2}, 
\begin{equation}
    \label{tag3}
\left(1-\left|z_{\delta I}\right|^{2}\right)\left|\nabla u\left(z_{\delta I}\right)\right|
 =2 \left| \int_{\partial \D} \frac{1-\left|z_{\delta I}\right|^{2}}{\left|\xi -z_{\delta I}\right|^{2} \tau_{z_{\delta I}} (\xi)} d \sigma (\xi) \right|.\end{equation}
Now the elementary estimate $|\xi - z_{\delta I}| \geq 1- |z_{\delta I}| = \delta |I|$ and Harnack's inequality give
\begin{equation}
    \label{tag311}
    \int_{I} \frac{1-\left|z_{\delta I}\right|^{2}}{\left|\xi-z_{\delta I} \right|^{2}} d \sigma (\xi) \leqslant \frac{ 2 \sigma(I)}{\delta|I|}
 \leqslant \frac{ 2 \varepsilon}{\delta} u \left(z_{I}\right) \leqslant \frac{ 2 \varepsilon}{\delta^{2}} u \left(z_{\delta I}\right). 
\end{equation}
Then
\begin{align}
    \label{estrella}
    & \left|\int_{\partial \D } \frac{1-|z_{\delta I}|^{2}}{\left|\xi-z_{\delta I}\right|^{2} \tau_{z_{\delta I}} (\xi)} d \sigma (\xi)\right|  \geq \\ \notag & \geq\left|\int_{\partial \D \setminus I} \frac{1-|z_{\delta I}|^{2}}{\left|\xi-z_{\delta I}\right|^{2} \tau_{z_{\delta I}} (\xi)} d \sigma (\xi)\right|  - \frac{2 \varepsilon}{\delta^2} u(z_{\delta I}). 
\end{align}
Since the argument of $\{\tau_{z_{\delta I}} (\xi) : \xi \in \partial \mathbb{D} \setminus I \}$ remains essentially constant (up to an error bounded by a fixed multiple of $\delta$), there exists an absolute constant $C>0$ such that 
$$
\left|\int_{\partial \D \setminus I} \frac{1-|z_{\delta I}|^{2}}{\left|\xi-z_{\delta I}\right|^{2} \tau_{z_{\delta I}} (\xi)} d \sigma (\xi)\right|  
 \geqslant(1- C\delta) \int_{\partial \D \setminus I} \frac{1- |z_{\delta I}|^{2}}{\left|\xi -z_{\delta I}\right|^{2}} d \sigma(\xi) . 
$$
Note that \eqref{tag311} gives that
$$
\int_{\partial \D \setminus I} \frac{1- |z_{\delta I}|^{2}}{\left|\xi -z_{\delta I}\right|^{2}} d \sigma(\xi) = u\left(z_{\delta I}\right)-\int_{I} \frac{1-\left|z_{\delta I}\right|^{2}}{\left|\xi -z_{\delta I}\right|^{2}} d \sigma (\xi)   \geqslant u\left(z_{\delta I}\right)\left( 1 -\frac{2 \varepsilon}{\delta^{2}} \right) . 
$$
Then using \eqref{estrella}, we obtain  
$$
\left|\int_{\partial \D } \frac{1-|z_{\delta I}|^{2}}{\left|\xi-z_{\delta I}\right|^{2} \tau_{z_{\delta I}} (\xi)} d \sigma (\xi)\right|  \geq   u (z_{\delta I})\bigg( (1 - C \delta ) (1- \frac{2 \varepsilon}{\delta^2}) - \frac{2 \varepsilon}{\delta^2} \bigg). 
$$
Hence, identity \eqref{tag3} gives 
$$
\left(1-\left|z_{\delta I}\right|^{2}\right)\left|\nabla u \left(z_{\delta I}\right)\right| \geqslant 2  u\left(z_{\delta I}\right) \bigg( (1 - C \delta ) (1- \frac{2 \varepsilon}{\delta^2}) - \frac{2 \varepsilon}{\delta^2} \bigg). 
$$
Now taking $\delta = \varepsilon^{1/4}$, the identity \eqref{tag1} shows that if $f$ is contractive, then $\varepsilon$ must be bounded below. 

\medskip

$(b) \Rightarrow(a)$. The idea is that condition (b) provides a certain cancelation in the integral appearing in \eqref{tag2}. More concretely, we will show that condition (b) implies that there exists a constant $0<D<1$ such that
\begin{equation}
    \label{AC1}
    \left|\int_{\partial \D} \frac{1-|z|^{2}}{|\xi-z|^{2} \tau_z (\xi)} d \sigma (\xi)\right| \leq D \int_{\partial \D} \frac{1-|z|^{2}}{|\xi-z|^{2}} d \sigma (\xi), \quad z \in \D  . 
\end{equation}
Split the unit circle into $12$ arcs of equal length $\partial \mathbb{D}=\bigcup_{i=1}^{12} J_{i}$ such that $J_{i}$ and $J_{i+6}$ are antipodal, $i=1, \ldots, 6$. Fix $z \in \mathbb{D}$ and consider the automorphism
$$
\tau (w)=\frac{z-w}{1-\bar{z} w}, \quad  w \in \overline{ \mathbb{D} } . 
$$
Let $I_{i}=\tau^{-1}\left(J_{i}\right), i=1, \ldots, 12$. Since the oscillation of the argument of $\{ \tau(\xi)$ : $\xi \in I_i \}$ is less than $\pi / 3$, we have that 
\begin{equation}
    \label{tag4}
    \left|\int_{I_{i}} \frac{1-|z|^{2}}{|\xi-z|^{2}} \frac{d \sigma(\xi)}{\tau(\xi)}\right| \simeq  \int_{I_{i}} \frac{1-|z|^{2}}{|\xi-z|^{2}} d \sigma ( \xi),\quad  i=1,\ldots , 12. 
\end{equation}
Note that each arc $I_{i} = I(z_i)$ for some $z_{i} \in \D$ with $d_{h}\left(z, z_{i}\right) \lesssim 1$, $i=1, \ldots 12$. Here, $I(z_i)$ denotes the arc centered at $z_i / |z_i|$ of length $1-|z_i|$. According to Remark 2, condition (b) is equivalent to $\sigma (I(z)) / |I(z)| \simeq u(z)$ for any $z \in \D $. Then, condition (b) and Harnack's inequality give 
$$
\int_{I_i} \frac{1-|z|^2}{|\xi - z|^2} d \sigma (\xi) \simeq \frac{\sigma\left(I (z_i)\right)}{| I (z_i)|} \simeq u(z_i) \simeq u(z), \quad i=1,\ldots, 12 .  
$$
Hence \eqref{tag4} gives 
$$
\left|\int_{I_i} \frac{1-|z|^{2}}{|\xi-z|^{2}} \frac{d \sigma(\xi)}{\tau(\xi)}\right| \simeq u (z), \quad i=1, \ldots , 12 .
$$
On the other hand, since $J_{i}$ and $J_{i+6}$ are antipodal, there exists a constant $0<C<2$ such that
$$
\sup \left\{ \left|\tau\left(\xi_{i}\right) + \tau\left(\xi_{i+6}\right)\right|: \xi_{i} \in I_{i}, \xi_{i+6} \in I_{i+6}\right\}<C ,\quad  i=1,\ldots , 6. 
$$
Hence, the complex numbers 
$$
\int_{I_{i}} \frac{1-|z|^{2}}{|\xi-z|^{2}} \frac{d \sigma(\xi)}{\tau(\xi)} \quad  \text { and } \quad  \int_{I_{i+6}} \frac{1-|z|^{2}}{|\xi-z|^{2}} \frac{d\sigma (\xi)}{\tau(\xi)}
$$
have comparable modulus and are far from being collinear, $i=1, \ldots , 6$. Hence there exists a constant $0<D<1$ such that
$$
\begin{aligned}
    & \left|\int_{I_{i}} \frac{1-|z|^{2}}{|\xi-z|^{2}} \frac{d \sigma(\xi)}{\tau(\xi)}+\int_{I_{i+6}} \frac{1-|z|^{2}}{|\xi-z|^{2}} \frac{d \sigma(\xi)}{\tau(\xi)}\right| \leq \\
& \leq D \left(\int_{I_{i}} \frac{1-|z|^{2}}{|\xi-z|^{2}} d \sigma (\xi)+\int_{I_{i+6}} \frac{1-|z|^{2}}{|\xi-z|^{2}} d \sigma (\xi)\right), \quad  i=1, \ldots, 6 .
\end{aligned}
$$
Summing over $i=1,\ldots, 6$, we obtain \eqref{AC1}, which according to the identities \eqref{tag1} and \eqref{tag2},  finishes the proof.

\medskip 

$(a) \Rightarrow(c)$. By identity \eqref{tag1}, there exists a constant $D<1$ such that $(1- |z|^2) |\nabla u (z)| \leq 2 D u (z)$ for any $z \in \D$. Hence, there exists a universal constant $C>0$ such that for any arc $I \subset \partial \D$, we have
$$
\left|\log u(w)-\log u\left(z_{I}\right)\right| \leq C+ D \log \frac{|I|}{1- |w|}, \quad  w \in Q(I), 
$$
that is,
$$
\left(\frac{|I|}{1- |w|}\right)^{-D} \lesssim \frac{u(w)}{u\left(z_{I}\right)} \lesssim \left(\frac{|I|}{1- |w|}\right)^{D}, \quad  w \in Q(I). 
$$
Since $D<1$, we deduce
$$
\frac{1}{|I|^{2}} \int_{Q(I)} \frac{d A(w)}{u(w)} \lesssim \frac{1}{(1- D)u\left(z_{I}\right)}
$$
and Lemma \ref{meanvalue} shows that $u$ is a $B_{2}$ weight. 

\medskip

$(c) \Rightarrow(b)$. We argue by contradiction. Assume that for $N=1,2, \ldots$, there exists an arc $I = I_N \subset \partial \mathbb{D}$ such that
$$
\frac{\sigma (I)}{|I|} \leq 2^{-N} u\left(z_{I}\right) .
$$
Let $ J \subset 2^{-1} I$ be an arc with $|J|=2^{-n} |I|$, $1 \leq n \leq N$. Then

$$
\int_{\partial \mathbb{D} \setminus  I} \frac{1-|z_{J}|^{2}}{\left|\xi-z_{J}\right|^{2}} d \sigma (\xi)  \lesssim \frac{1-\left|z_{J}\right|^2}{1-\left|z_{I}\right|^2} \int_{\partial \mathbb{D} \setminus  I} \frac{1-|z_{I}|^{2}}{\left|\xi-z_{I}\right|^{2}} d \sigma (\xi) 
 \leq 2^{-n + 1} u\left(z_{I}\right) . 
$$
On the other hand
$$
\int_{I} \frac{1-\left|z_{J}\right|^{2}}{\left|\xi-z_{J}\right|^{2}} d \sigma(\xi) \leq \frac{ 2 \sigma(I)}{|J|} =2^{n+1} \frac{\sigma(I)}{|I|}
 \leq 2^{n+1 -N} u\left(z_{I}\right) .
$$
Thus, $u\left(z_{J}\right) \lesssim \left(2^{-n}+2^{n-N}\right) u\left(z_{I}\right)$, for any arc $J \subset 2^{-1} I$ with $|J|=2^{-n} |I| $, $1 \leq n \leq N$. Consider the set $E_n =  \{ z \in Q(I): 2^{-n-1} |I| \leq 1- |z| \leq 2^{-n} |I |\}$. Then 
$$
\frac{1}{|I|^2}  \int_{Q(I)} \frac{d A}{u} \geq \sum_{n=0}^N \frac{1}{|I|^{2}} \int_{E_n} \frac{d A}{u} \gtrsim \frac{1}{|I|^{2}} \frac{1}{u (z_{I} )} \sum_{n=0}^{N} \frac{2^{-n}|I|^{2}}{2^{-n}+2^{n-N}} \gtrsim  \frac{N}{u\left(z_{I}\right)} . 
$$
Since $N$ can be taken arbitrarily large, Lemma \ref{meanvalue} gives that $u$ is not a $ B_{2}$ weight and finishes the proof. 
\end{proof}

\vspace{0.2cm}
We now discuss the Remarks 3 and 4 mentioned in the introduction.

\textbf{Doubling measures.} As explained in Remark 2 from the introduction, condition (b) is equivalent to $\sigma (I) / |I| \simeq u(z_I)$, for any arc $I \subset \partial \D$. We now show that condition (b) implies that $\sigma$ is a doubling measure. Actually, condition (b) and  Harnack's inequality give 
$$
\frac{\sigma(I)}{|I|} \simeq u(z_{I}) \simeq u (z_{I'}) \simeq \frac{\sigma (I')}{|I'|},  
$$
for any pair $I, I'$ of adjacent arcs of $\partial \D$ of equal length. 

However, there are doubling measures for which (b) fails, as the following example shows. Let $\mathcal{D}_{n} $ be the collection of dyadic arcs of $\partial \D$ of generation $n$ and consider the measure $\sigma$ defined recursively by declaring its mass on any dyadic arc, as follows. Fix $0<p<1 / 2$. Define $\sigma (\partial \D)=1$ and by induction, assume that $\sigma(I)$ has been defined for any $I \in \mathcal{D}_{n}$. Fix an arc $I \in \mathcal{D}_n$ and write $I= I_{-} \cup I_+$, where $I_{-} \in  \mathcal{D}_{n+1}$ (respectively, $ I_+ \in \mathcal{D}_{n+1}$) is the left (respectively right) half of $I$. Define
\begin{align}
    \label{1exemple}
& \sigma \left(I_{-}\right)=p \sigma (I),\quad  \sigma\left(I_{+}\right)=(1-p) \sigma (I), \quad \text{if } n \text{ is odd}, \\
& \notag \sigma \left(I_{-}\right)=(1-p) \sigma (I),\quad  \sigma\left(I_{+}\right)=p \sigma (I), \quad \text{if } n \text{ is even}. 
 \end{align}
This defines the measure $\sigma$ and since $0 < p<1 / 2$, $\sigma$ is singular with respect to the Lebesgue measure in $\partial \D$. Note that 
\begin{equation}
    \label{4example}
    \frac{p}{1-p} \leq \frac{\sigma (I)}{\sigma \left(I'\right)} \leq \frac{1-p}{p}, 
\end{equation}
for any pair of contiguous arcs $I, I' \in \mathcal{D}_{n}$, $n \geq 1$. 
Since any arc in the unit circle contains a dyadic subarc  of comparable length, estimates \eqref{4example} imply that $\sigma$ is a doubling measure. Note that for any $n=1,2, \ldots$, there exists a dyadic arc $I_{n} \in \mathcal{D}_n$ such that $\sigma\left(I_{n}\right)=p^{n}$ while by Harnack's inequality, $u( z_{I_{n}} ) \geq 2^{-n}$. Hence, condition (b) implies that $p \geq 1/ 4$. In Section \ref{section5} we will show that $\sigma$ satisfies condition (b) if and only if $p > 1/4 $. 

\medskip

We will now show that if $\sigma$ is a doubling measure with doubling constant sufficiently close to $2$, then $\sigma$ satisfies condition (b). Assume that there exists $0< \varepsilon < 1$ such that $\sigma(2I) \leq 2 (1 + \varepsilon) \sigma (I)$ for any arc $I \subset \partial \D$. Then, standard estimates of the Poisson kernel and the doubling condition give
$$
u(z_I) =  \int_{\partial \mathbb{D}} \frac{1-\left|z_{I}\right|^{2}}{\left|\xi-z_{I}\right|^{2}} d \sigma(\xi) \lesssim \frac{\sigma (I)}{|I|} + \sum_{k=1}^\infty \frac{\sigma\left(2^{k} I \right)}{2^{2k}\left| I\right|} \lesssim \frac{2 \sigma (I) }{(1- \varepsilon) |I|}.  
$$
Hence, condition (b) holds. 

\textbf{$B_p$ weights.} Any positive harmonic function is a $B_{p}$ weight for any $p>2$. Let us sketch the proof. Let $u$ be a positive harmonic function in $\mathbb{D}$. By Harnack's inequality
$$
\frac{1-|w|}{1-|z_I|} \leq \frac{u(w)}{u(z_I)} \leq \frac{1-|z_I|}{1-|w|},  \quad w \in Q(I).
$$
In particular, $u(w) \geq 2^{-n} u(z_I)$ if $w \in Q(I)$ and $1-|w| \geq 2^{-n} |I|$. Set $E_n = \{ w \in Q(I): 2^{-n-1} |I| \leq 1-|w| \leq 2^{-n} |I| \}$. Then 
$$
\begin{aligned}
& \int_{Q(I)} u^{-\frac{1}{p-1}} d A \leqslant \sum_{n \geqslant 0} A(E_n) \left(2^{-n} u\left(z_{I}\right)\right)^{\frac{-1}{p-1}}  \simeq \\
& \simeq A(Q(I)) u\left(z_{I}\right)^{\frac{-1}{p-1}} \sum_{n \geqslant 0} 2^{\frac{n}{p-1}} 2^{-n} \simeq A(Q(I)) u\left(z_{I}\right)^{\frac{-1}{p-1}},  
\end{aligned}
$$
if $p>2$. Using Lemma \ref{meanvalue}, we deduce that $u$ is a $ B_{p}$ weight if $ p>2$.

On the other hand, there are positive harmonic functions $u$ in $\mathbb{D}$ such that $1 / u $ is not integrable in $\mathbb{D}$ and hence cannot be  $B_{2}$ weights. For example, one can take $u(z) = (1-|z|^2) |1 - z|^{-2}$, $ z \in \D$.

\section{Proof of Theorems \ref{Zeros} and \ref{mixing}}\label{section3}

Recall that for an analytic mapping $f: \D \rightarrow \D$ that factors as $f= E B  S_{\nu}$ where $E$ is an outer function, $B$ is a Blaschke product and $S_\nu$ is the singular inner function corresponding to the singular measure $\nu$, we associate the measure
\begin{equation}
\label{4radial}
\mu=\mu(f)=\sum_{z \in \D : f(z)=0}\left(1-|z|^{2}\right) \delta_{z}+2 \nu + 2 \log |f|^{-1} dm . 
\end{equation}
Denote by $P[\mu]$ the Poisson integral of $\mu$ on the closed unit disc defined as in the statement of Theorem \ref{Zeros}. We start with the following auxiliary result.

\begin{lem}
\label{lemma3}
Let $f: \D \rightarrow \D$ be an analytic mapping and let $\mu$ be the measure defined in \eqref{4radial}. Then

(a) $\log |f(z)|^{-2} \geqslant P[\mu](z)$ for any $ z \in \mathbb{D}$.

(b) For any $\delta>0$, there exists a constant $C=C(\delta)>0$ such that
\begin{equation*}
   \log |f(z)|^{-2} \leq C P [\mu ] (z) \quad \text { if } \quad d_{h}(z,\{w \in \mathbb{D} : f(w)=0 \}) \geqslant \delta.  
\end{equation*}
Moreover $C=C(\delta) \to 1$ as $\delta \to \infty$.

(c) If $f(z) \neq 0$, then 
$$
\frac{f'(z)}{f(z)}  = - \int_{\overline{\D}} \frac{d \mu (w)}{|1- \overline{w} z|^2 \tau_z (w)},  
$$
where $\tau_z (w) = (z-w)(1 - \overline{z} w)^{-1}$, $w \in \overline{\D}$, $z \in \D$.
\end{lem}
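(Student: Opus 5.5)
We start from the canonical factorization $f = E B S_\nu$ and take $-2\log$ of each factor. The structure of $\mu$ then reads off directly:
$$
\log |f(z)|^{-2} = \sum_{f(a)=0} \log \frac{1}{\rho(z,a)^2} + 2\int_{\partial\D} \frac{1-|z|^2}{|\xi-z|^2}\,d\nu(\xi) + 2\int_{\partial\D} \frac{1-|z|^2}{|\xi-z|^2}\log|f(\xi)|^{-1}\,dm(\xi).
$$
Here $\rho(z,a)$ is the pseudo-hyperbolic distance. The singular and outer terms are exactly the boundary part of $P[\mu](z)$, since for $w=\xi\in\partial\D$ we have $|1-\overline{\xi}z| = |\xi - z|$. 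So all three statements reduce to comparing $\log \rho(z,a)^{-2}$ with the Poisson-type kernel term $(1-|a|^2)(1-|z|^2)/|1-\overline{a}z|^2 = 1-\rho(z,a)^2$, one zero at a time.

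For (a) we use the elementary inequality $\log(1/x) \ge 1-x$ with $x=\rho(z,a)^2\in(0,1]$. This gives $\log \rho(z,a)^{-2} \ge 1-\rho(z,a)^2$. Summing over the zeros, with multiplicity, and adding the identical boundary terms yields $\log|f(z)|^{-2}\ge P[\mu](z)$. Note that $P[\mu]$ is finite by the Blaschke condition.

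For (b), assume $d_h(z,\text{zeros})\ge\delta$. This means $\rho(z,a)\ge r(\delta)>0$ for every zero $a$, where $r(\delta)\to1$ as $\delta\to\infty$. On the interval $x=\rho^2\in[r^2,1]$ the ratio $\log(1/x)/(1-x)$ is bounded by $C(\delta)=\log(r^{-2})/(1-r^2)$, because the ratio is decreasing in $x$. This $C(\delta)$ tends to $1$ as $r\to1$. Since $C(\delta)\ge1$, the boundary terms are also dominated by $C(\delta)$ times themselves, and summing gives the claim. This step is the only one requiring care, and the care consists only in checking the monotonicity of $\log(1/x)/(1-x)$.

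For (c), we differentiate the factorization logarithmically: $f'/f=\sum_a (\log b_a)' + (\log S_\nu)' + (\log E)'$. The differentiation under the sum and integrals is justified by locally uniform convergence away from the zeros.

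For the boundary parts, $\log S_\nu(z) = -\int \frac{\xi+z}{\xi-z}\,d\nu$, and its derivative is $-\int \frac{2\xi}{(\xi-z)^2}\,d\nu$. We rewrite this using $\tau_z(\xi)=(z-\xi)/(1-\overline{z}\xi)$ and $|1-\overline{\xi}z|^2 = (\xi-z)\overline{(\xi - z)}$. Together with $\overline{\xi}=1/\xi$, this gives
$$
\frac{1}{|1-\overline{\xi}z|^2\,\tau_z(\xi)} = \frac{1-\overline{z}\xi}{(z-\xi)(\xi-z)(\overline{\xi}-\overline{z})} = -\frac{\xi}{(\xi-z)^2}.
$$
Hence $-\int \frac{2\,d\nu}{|1-\overline{\xi}z|^2\tau_z(\xi)} = \int\frac{2\xi\,d\nu}{(\xi-z)^2}$. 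This has the wrong sign, so we recheck the convention: $S_\nu=\exp(-\int\frac{\xi+z}{\xi-z}d\nu)$ has derivative of the exponent $-\int \frac{2\xi}{(\xi-z)^2}d\nu$, which is what must match. The sign therefore has to be verified against the orientation of $\tau_z$, and I expect to find agreement once $\tau_z(w)$ is written as stated. The outer factor is handled identically with the measure $2\log|f|^{-1}dm$.

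For a Blaschke factor $b_a(z)=\frac{|a|}{a}\frac{a-z}{1-\overline{a}z}$ we have
$$
\frac{b_a'}{b_a}(z) = \frac{-(1-|a|^2)}{(a-z)(1-\overline{a}z)}.
$$
This must be matched with $-\frac{1-|a|^2}{|1-\overline{a}z|^2\tau_z(a)}$, which equals $-\frac{(1-|a|^2)(1-\overline{z}a)}{|1-\overline{a}z|^2(z-a)}$, and that simplifies to $\frac{-(1-|a|^2)}{(z-a)(1-\overline{a}z)}$. So both terms agree up to the single sign convention $a-z$ versus $z-a$.

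Thus the main (purely bookkeeping) obstacle is in (c): carefully tracking the sign and orientation conventions of $\tau_z$, which must come out consistent across the zero, singular, and outer contributions. The analytic content lies entirely in the scalar inequalities used for (a) and (b).
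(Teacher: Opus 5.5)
Your treatment of (a) and (b) is the paper's argument. You split $\log|f(z)|^{-2}$ along the factorization and observe that the singular and outer terms are exactly the boundary part of $P[\mu](z)$. You then compare $\log\rho(z,a)^{-2}$ with $1-\rho(z,a)^2=(1-|a|^2)(1-|z|^2)/|1-\overline{a}z|^2$ zero by zero. The monotonicity of $\log(1/x)/(1-x)$ is a clean way to get the explicit constant $C(\delta)=\log(r^{-2})/(1-r^2)\to 1$.

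For (c) you also follow the paper's route (add the three logarithmic derivatives), but you leave the sign unresolved, and the agreement you ``expect'' does not exist. Your two computations are correct. With $\tau_z(w)=(z-w)/(1-\overline{z}w)$ one gets $|1-\overline{a}z|^2\tau_z(a)=(1-\overline{a}z)(z-a)$ and $1/(|1-\overline{\xi}z|^2\tau_z(\xi))=-\xi/(\xi-z)^2$. So the zero, singular and outer contributions all come with the sign opposite to the printed one. No rewriting fixes this: for $f(z)=z$ one has $\mu=\delta_0$ and $f'(z)/f(z)=1/z$, while the right-hand side of (c) equals $-1/\tau_z(0)=-1/z$. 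Hence (c) as printed is false. The paper's one-line proof (``computing these three logarithmic derivatives'') passes over the same sign slip.

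What your computation actually proves is
\begin{equation*}
\frac{f'(z)}{f(z)}=\int_{\overline{\D}}\frac{d\mu(w)}{|1-\overline{w}z|^{2}\,\tau_z(w)},\qquad \tau_z(w)=\frac{z-w}{1-\overline{z}w}.
\end{equation*}
Equivalently, the printed formula holds with the orientation $\tau_z(w)=(w-z)/(1-\overline{z}w)$, which is the one used in \eqref{tag2}. You should state this corrected identity explicitly rather than treat the discrepancy as pending bookkeeping. The slip is harmless for the paper: part (c) is only used through the modulus $|f'(z)/f(z)|$ in the proof of Theorem \ref{Zeros}. There all the integrals are estimated in absolute value, so the sign never matters.
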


Proof. Let $\{z_n \}$ be the zeros of $f$ and let $\nu$ be the singular positive measure associated with the singular inner factor of $f$. Then
$$
\log |f(z)|^{-2}=\sum_{n} \log \left|\frac{z-z_{n}}{1-\overline{z_{n}} z}\right|^{-2}+2 P[\nu + \log |f|^{-1} dm](z), \quad z \in \mathbb{D} .
$$
The elementary estimate $-\log x \geqslant 1-x$, $0<x<1$, and the identity $1- |\tau_z (w)|^2 = (1-|z|^2)(1- |w|^2) |1 - \overline{z} w|^{-2}$ for $w \in \overline{\D}$, $z \in \D$, give (a). Part (b) follows similarly from the estimate $-\log x \lesssim 1-x$, $0<\delta <x<1$. Note that if $f=BSE$ is the inner-outer factorization of $f$, then
$$
\frac{f'(z)}{f(z)} = \frac{B'(z)}{B(z)} + \frac{S'(z)}{S(z)} + \frac{E'(z)}{E(z)}, 
$$
if $f(z) \neq 0$. Then (c) follows after computing these three logarithmic derivatives.
\qed

\begin{lem}
\label{primer}    
Let $f: \mathbb{D} \rightarrow \mathbb{D}$ be an analytic mapping. Assume that there exists a constant $0<c<1$ such that
\begin{equation}
    \label{1*}
\sup \left\{D_{h} f(z) :|f(z)| \geqslant c\right\}<1 .
\end{equation} 
Then $\sup \{D_{h} f(z): z \in \mathbb{D}\}<1$.
\end{lem}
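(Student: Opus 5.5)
The plan is a normal families argument combined with the equality case of the Schwarz–Pick lemma recalled in the introduction: if $D_h(g)(z_0)=1$ at a single point, then $g$ is an automorphism of $\D$. We argue by contradiction, so assume \eqref{1*} holds, say $\sup\{D_h f(z) : |f(z)|\geq c\} = D_0<1$, but $\sup\{D_h f(z): z\in\D\}=1$. Then there are points $z_n\in\D$ with $D_h f(z_n)\to 1$, and by \eqref{1*} we may assume $|f(z_n)|<c$ for all $n$.

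\emph{Step 1 (renormalisation).} Let $\varphi_n(w)=(z_n+w)/(1+\overline{z_n}w)$ and set $g_n=f\circ\varphi_n$. By the conformal invariance of the hyperbolic derivative (chain rule together with $D_h\varphi_n\equiv 1$), we have $D_h g_n(w)=D_h f(\varphi_n(w))$ for all $w\in\D$. In particular $D_h g_n(0)=D_h f(z_n)\to 1$ and $|g_n(0)|<c$.

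\emph{Step 2 (passing to the limit).} The family $\{g_n\}$ is bounded by $1$, so by Montel's theorem a subsequence converges locally uniformly on $\D$ to an analytic $g$ with $|g|\leq 1$. Since $|g(0)|\leq c<1$, the maximum principle gives $g(\D)\subset\D$. Derivatives also converge locally uniformly, and $1-|g(w)|^2>0$ at every $w$. Hence $D_h g_n(w)\to D_h g(w)$ pointwise on $\D$, and in particular $D_h g(0)=1$. By the equality case of the Schwarz lemma, $g$ is an automorphism of $\D$.

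\emph{Step 3 (contradiction).} Because $g$ is onto $\D$, we can choose $w_0\in\D$ with $|g(w_0)|=(1+c)/2$. Then $g_n(w_0)\to g(w_0)$, so $|f(\varphi_n(w_0))|=|g_n(w_0)|\geq c$ for all large $n$ along the subsequence. On the other hand, Step 1 and the pointwise convergence of Step 2 give
\[
D_h f(\varphi_n(w_0))=D_h g_n(w_0)\to D_h g(w_0)=1 .
\]
This contradicts $D_h f\leq D_0<1$ on $\{|f|\geq c\}$.

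The only delicate point is Step 2: we must make sure the limit $g$ is a genuine self-map of $\D$ and that $D_h$ passes to the limit. This is exactly where the hypothesis $|f(z_n)|<c$ is used, since it keeps $g(0)$ inside $\D$ and the denominators $1-|g_n|^2$ bounded away from zero on compact sets.
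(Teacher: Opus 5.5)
Your proof is correct, but it runs on a different mechanism from the paper's. The paper also argues by contradiction from points $z_n$ with $|f(z_n)|\le c$ and $D_h(f)(z_n)\to 1$. It first invokes the Beardon--Minda estimate $d_h(D_h(f)(z),D_h(f)(w))\le 2\,d_h(z,w)$ to conclude that $k_n=d_h(z_n,\{|f|\ge c\})\to\infty$. It then observes that the renormalized maps $\tau_{f(z_n)}\circ f\circ\tau_{z_n}$ are bounded by a constant $C(c)<1$ on hyperbolic discs of radius $k_n/2$ about $0$. Since their derivatives at $0$ tend to $1$ in modulus, this contradicts Cauchy's estimates. You replace both ingredients with compactness plus rigidity. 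Montel's theorem gives a limit $g$ with $D_h(g)(0)=1$, the equality case of Schwarz--Pick forces $g$ to be an automorphism, and surjectivity of $g$ then produces points of $\{|f|\ge c\}$ where $D_h(f)$ is arbitrarily close to $1$. Your route is more self-contained: it needs only Montel, Weierstrass and Schwarz--Pick, with no Lipschitz estimate for the hyperbolic derivative. The paper's route is quantitative: tracing the constants through Beardon--Minda and the Cauchy estimate gives an explicit bound for $\sup D_h(f)$ in terms of $c$ and $\sup\{D_h(f)(z):|f(z)|\ge c\}$. Your argument also yields a bound depending only on these two numbers, since it works verbatim for a sequence $f_n$ sharing the same constants, but that bound is not explicit.
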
 

Proof. We argue by contradiction. Assume that there exists a sequence of points $z_{n} \in \mathbb{D}$ with $\left|f\left(z_{n}\right)\right| \leq c$ and $\left|D_{h} (f) \left(z_{n}\right)\right| \rightarrow 1$. 
Beardon and Minda proved (see Corollary 3.7 of \cite{BM}) that
$$
d_{h}\left(D_{h} (f) (z), D_{h} (f) (w)\right) \leq 2 d_{h}(z, w), \quad z, w \in \mathbb{D} .
$$
Then the assumption \eqref{1*} gives that
$$
k_{n}=d_{h}\left(z_{n},\{z \in \D:|f(z)| \geqslant c \} \right) \rightarrow \infty , \quad \text{ as } \, n \to \infty . 
$$
For $a \in \D$, let $\tau_a$ be the automorphism of $\D$ with $\tau_a^{-1} = \tau_a$ and $\tau_a (a) =0$. 
Consider $g_{n}= \tau_{f(z_n)} \circ f \circ\tau_{z_n}$, $n=1,2,\ldots$. Then $\left|g_{n}^{\prime}(0)\right|=D_{h} (f)(z_{n} )  \rightarrow 1$ as $n \to \infty$ and there exists a constant $0< C=C(c) < 1$ such that $|g_n (z)| \leq C$ if $d_{h}(z, 0) \leq k_{n} / 2$. Since $k_{n} \rightarrow \infty$ this contradicts Cauchy's estimates and finishes the proof.
\qed

\medskip

We are now ready to prove Theorem \ref{Zeros}. 

\begin{proof}[Proof of Theorem \ref{Zeros}]

We start showing that (b)$\implies$(a). As before, for $a \in \D$ let $\tau_a$ be the automorphism of $ \overline{\D}$ with $\tau_a^{-1} = \tau_a$ and $\tau_a (a) =0$, that is, $\tau_a (w) = (a-w)/ (1- \overline{a}w)$, $w \in \D$. Part (c) of Lemma \ref{lemma3} says 
\begin{equation}
\label{calculation}
\frac{(1- |z|^2) f^{\prime}(z)}{f(z)}= - \int_{\overline{\mathbb{D}}} \frac{\left(1-|z|^{2}\right) d \mu (w)}{|1-\bar{z} w|^{2} \tau_z (w)} , \quad z \in \mathbb{D} \setminus \{z \in \mathbb{D}: f(z) = 0 \} . 
\end{equation}

Let $C >0$ be the constant appearing in assumption (b). Let $C_0>0$ be a large constant to be fixed later. Part (a) of Lemma \ref{primer} says that $P[\mu ] (z) \leq \log |f(z)|^{-2}$, $ z \in \D$. Then, taking $c = c(C_o, C) <1$ sufficiently close to $1$, the Schwarz Lemma gives that $P[\mu] (z^*) \leq C$ if $d_h (z^* , \{z \in \D : |f(z)| >c \}) \leq C_0$. Fix $z \in \D$ with $|f(z)|> c$. Assumption (b) gives that for any Carleson box $Q \subset \D$ with $d_h (z_Q , z) \leq C_0$, we have 
$$
\frac{\mu(Q)}{l(Q)} \geq C P [\mu] (z_Q) .
$$
Fixing $C_0>0$ sufficiently large and arguing as in the implication (b) $\implies$ (a) of Theorem \ref{B2weights}, we show that there exists a constant $0<D<1$ independent of $z$ such that
$$
 \left|\int_{\overline{\mathbb{D}}} \frac{\left(1-|z|^{2}\right) d \mu (w)}{|1-\bar{z} w|^{2} \tau_z (w)} \right| 
 \leq D \int_{\overline{\mathbb{D}}} \frac{1-|z|^{2}}{|1-\bar{z} w|^{2}} d \mu(w)  . 
$$
Applying part (a) of Lemma \ref{lemma3} and identity \eqref{calculation}, we deduce that 
$$
(1-|z|^2) |f' (z)| \leq D |f(z)| \log |f(z)|^{-2}. 
$$
Using the elementary estimate $x \log (x^{-2})  \leq 1-x^{2}$, $0<x<1$, we deduce that $D_h (f) (z) \leq D$. Since this holds for any $z \in \D$ with $|f(z)|\geq c$, Lemma \ref{primer} implies that $f$ is contractive.

The converse implication (a)$\implies$(b) is proved by contradiction. If (b) does not hold, then for any $0 < \varepsilon < 1$ there exists a Carleson box $Q (\varepsilon)$ with
\begin{equation}
    \label{assum}
    \frac{\mu(\overline{Q(\varepsilon)})}{l(Q(\varepsilon))} \leq \varepsilon P[\mu]\left(z_{Q(\varepsilon)}\right) \text { and } P[\mu]\left(z_{Q(\varepsilon)}\right) \leq \varepsilon . 
\end{equation}
Then $ \mu (\overline{Q(\varepsilon)}) \leq \varepsilon^2 l (Q(\varepsilon))$. Let $z_\varepsilon \in \D$ be the point with $\operatorname{Arg} (z_\varepsilon ) = \operatorname{Arg} (z_{Q(\varepsilon)})$ and $1 - |z_\varepsilon| = \varepsilon^{1/4} l (Q(\varepsilon))$. Note that 
\begin{align}
P & [\mu] (z_\varepsilon)  = \int_{\overline{Q(\varepsilon)}} \frac{1-|z_\varepsilon|^2}{|1 - \overline{z_\varepsilon} w|^2} d\mu (w) + \int_{\overline{\mathbb{D}} \setminus \overline{Q(\varepsilon)}} \frac{1-|z_\varepsilon|^2}{|1 - \overline{z_\varepsilon} w|^2} d\mu (w) \leq \\\ 
 & \notag \leq \frac{2\mu(\overline{Q(\varepsilon)})}{1- |z_\varepsilon|} + \frac{2(1-|z_\varepsilon|^2)}{1-|z_{Q(\varepsilon)}|^2} P[\mu] (z_{Q(\varepsilon)}) \lesssim (\varepsilon^{7/4} + \varepsilon^{5/4} ) \leq C \varepsilon^{5/4} , 
\end{align}
where $C>0$ is an absolute constant. Since $P[\mu] (z_\varepsilon) \to 0$ as $\varepsilon \to 0$, we have that $d_h (z_\varepsilon, \{z \in \D : f(z)= 0 \})$ tends to $\infty$ as $\varepsilon $ tends to $0$. Then, parts (a) and (b) of Lemma \ref{lemma3} give  
\begin{equation}
    \label{zeros2}
  \lim_{\varepsilon \to 0}  \frac{ - \log |f(z_\varepsilon)|^2}{ P[\mu] (z_\varepsilon)} = 1 . 
\end{equation}
In particular, $|f(z_\varepsilon)| \to 1$ as $\varepsilon \to 0$. Now part (c) of Lemma \ref{lemma3} gives 
\begin{align}
    \label{1}
&    \left(1-\left|z_{\varepsilon}\right|^{2}\right) \frac{\left|f^{\prime}\left(z_{\varepsilon}\right)\right|}{\left|f\left(z_{\varepsilon}\right)\right|}=\left|\int_{\overline{\D} } \frac{1-\left|z_{\varepsilon}\right|^{2}}{\left|1-\bar{w} z_\varepsilon \right|^{2} \tau_{z_\varepsilon} (w)} d \mu(w)\right| \geq \\
& \geq \notag \left|\int_{\overline{\D}\setminus \overline{Q(\varepsilon)}} \frac{1-\left|z_{\varepsilon}\right|^{2}}{\left|1-\bar{w} z_\varepsilon \right|^{2} \tau_{z_\varepsilon} (w)} d \mu(w)\right| - C \int_{\overline{Q(\varepsilon)} } \frac{1-\left|z_{\varepsilon} \right|^{2}}{\left|1-\bar{w} z_\varepsilon \right|^{2}} d \mu(w),
\end{align}
where $C>0$ is an absolute constant. Note that
\begin{equation}
    \label{2}
    \int_{\overline{Q(\varepsilon)}} \frac{1-\left|z_{\varepsilon} \right|^{2}}{\left|1-\bar{w} z_\varepsilon \right|^{2}}  d \mu(w) \lesssim \frac{\mu (\overline{Q(\varepsilon)})}{\varepsilon^{1/4} l (Q(\varepsilon) )} \lesssim \varepsilon^{3/4} P[\mu] (z_{Q(\varepsilon)}) \leq C \varepsilon^{1/2} P[\mu] (z_{\varepsilon}),
\end{equation}
where in the second estimate we have used the assumption \eqref{assum} and in the last estimate we have applied Harnack's inequality. Since $\sup \{|\tau_{z_\varepsilon} (w) - z_\varepsilon| : w \in \overline{\D} \setminus Q(\varepsilon) \}$ tends to $0$ as $\varepsilon \to 0$, there exists $\eta(\varepsilon) \to 0$ as $\varepsilon \to 0$, such that 
\begin{equation*}
    \left|  \int_{\overline{\D} \backslash \overline{Q(\varepsilon)}} \frac{1-\left|z_{\varepsilon}\right|^{2}}{\left|1-\bar{w} z_{\varepsilon}\right|^{2} \tau_{z_\varepsilon} (w)} d \mu(w) \right| 
 \geqslant\left(1- \eta \left(\varepsilon \right)\right)   \int_{\overline{\D} \backslash \overline{Q(\varepsilon)}} \frac{1-\left|z_{\varepsilon}\right|^{2}}{\left|1-\bar{w} z_{\varepsilon}\right|^{2}} d \mu(w).  
\end{equation*}
Using \eqref{2}, we deduce 
\begin{align}
    \label{4}
& \left|  \int_{\overline{\D} \backslash \overline{Q(\varepsilon)}} \frac{1-\left|z_{\varepsilon}\right|^{2}}{\left|1-\bar{w} z_{\varepsilon}\right|^{2} \tau_{z_\varepsilon} (w)} d \mu(w) \right| 
 \geq  \\ \notag  & \geq (1- \eta (\varepsilon)) \bigg( P[\mu] (z_\varepsilon) - \int_{\overline{Q(\varepsilon)}} \frac{1-|z_\varepsilon|^2}{|1 - \overline{z_\varepsilon} w|^2} d \mu (w)\bigg)  
 \geq \\ \notag  & \geq (1 - \eta (\varepsilon)) (1 - C \varepsilon^{1/2}) P[\mu] (z_{\varepsilon}) .   
\end{align}
Now applying the estimates \eqref{2} and \eqref{4} in \eqref{1}, we obtain
\begin{equation}
    \label{5}
     \left(1-\left|z_{\varepsilon}\right|^{2}\right) \frac{\left|f^{\prime}\left(z_{\varepsilon}\right)\right|}{\left|f\left(z_{\varepsilon}\right)\right|} \geq \big( (1 - \eta (\varepsilon)) (1 - C \varepsilon^{1/2}) - C^2 \varepsilon^{1/2} \big) P [\mu] (z_\varepsilon). 
\end{equation}
Since $|f(z_\varepsilon)| \to 1$ as $\varepsilon \to 0$, identity \eqref{zeros2} and the elementary fact $\lim_{x \to 1} (1-x)^{-1} \log(1/x)  = 1$ show that $D_h (f) (z_\varepsilon) \to 1$ as $\varepsilon \to 0$. This finishes the proof.

\end{proof}

The proof of Theorem \ref{mixing} uses Theorem \ref{B2weights} and the Aleksandrov disintegration Theorem which says that the average of the Aleksandrov-Clark measures is the Lebesgue measure. More concretely, given an analytic mapping $f: \D \rightarrow \D$ consider the family $\{\sigma_\alpha : \alpha \in \partial \D\}$ of its Aleksandrov-Clark measures. Then for any integrable function $G$ on $\partial \D$ we have
\begin{equation}
    \label{ADT}
    \int_{\partial \D} G dm = \int_{\partial \D}  \bigg( \int_{\partial \D} G(\xi) d \sigma_\alpha (\xi) \bigg) dm (\alpha) . 
\end{equation}
See \cite{PolSar} or \cite{Saks}.

\begin{proof}[Proof of Theorem \ref{mixing}]
    
Let $\{ \sigma_{\alpha}: \alpha \in \partial \mathbb{D} \}$ be the family of Aleksandrov-Clark measures of $f$. The definition \eqref{AC11} of $\sigma_\alpha$ gives that
$$
\omega (f(z), E)= \int_{E} \frac{1-|f(z)|^{2}}{\left|\alpha -f(z)\right|^{2}} d m(\alpha)= 
 \int_{E} \int_{\partial \mathbb{D}} \frac{1-|z|^{2}}{|\xi-z|^{2}} d \sigma_{\alpha}(\xi) d m (\alpha) , 
$$
for any point $z \in \D$ and any measurable set $E \subset \partial \D $. Since $f$ is contractive, Theorem \ref{B2weights} provides a constant $C=C(f) >0$ such that 
\begin{equation}
    \label{0}
 C^{-1} \frac{\sigma_{\alpha}\left(I(z) \right)}{\left|I(z) \right|} \leq \int_{\partial \mathbb{D}} \frac{1-|z|^{2}}{|\xi -z|^{2}} d \sigma_{\alpha}(\xi) \leq C \frac{\sigma_{\alpha}\left(I(z) \right)}{\left|I(z) \right|}, \quad z \in \D, \alpha \in \partial \D . 
\end{equation}
Here $I(z)$ is the arc on $\partial \D$ centered at $z/|z|$ of length $1-|z|$. Hence
\begin{equation}
    \label{nou1}
    \omega(f(z), E) \simeq \int_{E} \frac{\sigma_{\alpha}\left(I(z) \right)}{\left|I(z) \right|} dm(\alpha), \quad z \in \mathbb{D} . 
\end{equation}
On the other hand, the Aleksandrov disintegration Theorem and the fact that $\sigma_{\alpha}$ is concentrated on $f^{-1} (\{\alpha \}) $ give
$$
 m\left(f^{-1}(E) \cap I \right)=\int_{\partial \D} \sigma_{\alpha}\left(f^{-1}(E) \cap I \right) d m(\alpha)
 =\int_{E} \sigma_{\alpha}(I) dm (\alpha) , 
$$
for any arc $I \subset \partial \D$. Applying \eqref{nou1} with $z=z_I$, we deduce that
$$
m\left(f^{-1}(E) \cap I\right) \simeq m(I) \omega \left(f(z_{I}), E \right) .
$$ 

We now prove the converse implication (b)$\implies$ (a). Given $a \in \mathbb{D}$ let $\tau_{a}=\tau_{a}^{-1}$ be the automorphism of $\D$ with $\tau(a)=0$. Fix $z \in \mathbb{D}$ and consider $g=\tau_{f(z)} \circ f \circ \tau_{z}$. Condition \eqref{0theoremmixing} gives that for any two arcs $I, J \subset \partial \mathbb{D}$ with $|I| \simeq |J| \simeq 1 $, we have $m\left(g^{-1}(J) \cap I\right) \simeq 1$. It follows that there exists a constant $c<1$, independent of $z$, such that
$$
\left|g^{\prime}(0)\right|=\left|\int_{\partial \mathbb{D}} \frac{g(\xi)}{\xi} dm (\xi)\right| \leqslant c<1 . 
$$
Since $D_h (f) (z) = |g'(0)|,$ this finishes the proof. \end{proof}

\begin{proof}[Proof of Corollary \ref{cor1}]
The claim follows from parts (b) of Theorems \ref{B2weights} and \ref{mixing}. 
\end{proof}    

\section{Proof of Theorems \ref{preimage} and \ref{limsup}}\label{section4}

\begin{proof}[Proof of Theorem \ref{preimage}]
Fix $\alpha \in \partial \D$ and let $\sigma=\sigma_{\alpha}$ be the Aleksandrov-Clark measure of $f$ at the value $\alpha$. Let $u$ be the Poisson integral of $\sigma$. Identity \eqref{tag1} says 
$$
\left(1-|z|^{2}\right) \frac{|\nabla u(z)|}{2 u(z)}=D_{h} f(z) \leq D, \quad z \in \mathbb{D} .
$$
Hence there exists a universal constant $C>0$ such that 
$$
\begin{aligned}
& |\log u(z)-\log u(0)| \leq \int_{0}^{z} \frac{|\nabla u(w)|}{u(w)}|d w| \leq \\
& \leqslant \int_{0}^{z} \frac{2 D|d w|}{1-|w|^{2}}= - D \log (1- |z|) + C , \quad z \in \mathbb{D} .
\end{aligned}
$$
Exponentiating, we get
$$
(1-|z|)^{D} \lesssim \frac{u(z)}{u(0)} \lesssim (1-|z|)^{-D},\quad  z \in \mathbb{D} . 
$$
Since $\sigma (I) / |I| \lesssim u\left(z_{I}\right)$ for any arc $I \subset \partial \D$, we deduce that $\sigma (I) \lesssim u(0)|I|^{1-D}$, for any arc $I \subset \partial \D$. Consequently $\sigma$ is absolutely continuous with respect to $(1-D)$-dimensional Hausdorff measure. Fix an arc $I \subset \partial \D$. Since $f$ does not extend analytically through $I$, we have $\sigma (I)>0$. Since $\sigma$ is concentrated on $ f^{-1}(\{\alpha\}) $, we deduce that $M^{1-D} (f^{-1}\{\alpha\} \cap I \ )>0$. 
\end{proof}

The same argument shows the following result. 

\begin{cor}
    \label{local}
    Let $f$ be an inner function such that
$$
\lim _{|z| \rightarrow 1} \frac{\left(1-|z|^{2}\right)\left|f^{\prime}(z)\right|}{1-|f(z)|^{2}}=0. 
$$
Then the Hausdorff dimension of $I \cap f^{-1}(\{\alpha\})$ is equal to $1$, for any arc $I \subset \partial \D$ and any $\alpha \in \partial \mathbb{D}$.
\end{cor}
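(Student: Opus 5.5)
Fix an arc $I_0\subset\partial\D$ and $\alpha\in\partial\D$, and let $\eta>0$ be given. The plan is to repeat the argument of Theorem \ref{preimage} in a localized form. Let $\sigma=\sigma_\alpha$ and let $u$ be its Poisson integral. By hypothesis there is $r_0<1$ such that $D_h(f)(z)\le \eta$ whenever $|z|\ge r_0$. By identity \eqref{tag1} this gives
$$
(1-|z|^2)|\nabla u(z)|\le 2\eta\, u(z), \qquad |z|\ge r_0 .
$$
The goal is to show that $\sigma(I)\le C(\eta,f)\,|I|^{1-\eta}$ for every arc $I\subset I_0$. Then $\sigma|_{I_0}$ is absolutely continuous with respect to the $(1-\eta)$-dimensional Hausdorff measure, and by the mass distribution principle every set carrying positive $\sigma$-mass inside $I_0$ has dimension at least $1-\eta$.

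\textbf{Step 1: growth bound for $u$.} For $z\in\D$ with $|z|\ge r_0$, integrate $|\nabla \log u|$ along the radius from $r_0 z/|z|$ to $z$. This gives
$$
|\log u(z)-\log u(r_0 z/|z|)|\le -\eta\log(1-|z|)+C .
$$
On the compact circle $\{|w|=r_0\}$, $u$ is bounded above by a constant $M$ depending only on $f$ and $r_0$. Hence $u(z)\lesssim M(1-|z|)^{-\eta}$ for all $|z|\ge r_0$.

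\textbf{Step 2: bound on arcs.} Remark 2 after Theorem \ref{B2weights} gives $\sigma(I)/|I|\le 3u(z_I)$ for every arc. For arcs with $|I|\le 1-r_0$, the point $z_I$ satisfies $|z_I|\ge r_0$, so Step 1 yields $\sigma(I)\lesssim M|I|^{1-\eta}$. Longer arcs are handled trivially, since $\sigma$ is finite.

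\textbf{Step 3: conclusion.} We need $\sigma(I_0)>0$. If $\sigma(I_0)=0$, then the relation $(\alpha+f)/(\alpha-f)=\int\frac{\xi+z}{\xi-z}\,d\sigma+iC_\alpha$ extends analytically across the interior of $I_0$, with purely imaginary values there. Hence $f$ extends analytically across $I_0$. But then $D_h(f)(z)\to1$ as $z$ approaches $I_0$, which contradicts the hypothesis. So $\sigma(I_0)>0$. Since $f$ is inner, $\sigma$ is concentrated on $f^{-1}(\{\alpha\})$, so $\sigma$ gives positive mass to $f^{-1}(\{\alpha\})\cap I_0$. The Frostman bound of Step 2 then gives $M^{1-\eta}(f^{-1}(\{\alpha\})\cap I_0)>0$, hence $\dim\ge 1-\eta$. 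Letting $\eta\to0$ gives dimension $1$, and the reverse inequality is trivial.

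The main obstacle is the step that is only implicit in the proof of Theorem \ref{preimage}: showing that an inner function with $\sigma(I_0)=0$ extends analytically across $I_0$, so that $D_h(f)\to1$ there. This follows from the Poisson representation \eqref{AC} together with the Schwarz reflection principle, using the fact that the extension maps the arc into $\partial\D$ and is locally univalent there. With that in hand, everything else is the local version of the estimate already carried out in the proof of Theorem \ref{preimage}.
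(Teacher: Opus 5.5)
Your proposal is correct and follows the paper's route: the paper proves this corollary simply by rerunning the argument of Theorem \ref{preimage}, and your localization to $|z|\ge r_0$ with $D$ replaced by $\eta$ (bounding $u$ on the circle $|z|=r_0$ and letting $\eta\to 0$) is exactly that argument. Your Step 3 also spells out a fact the paper uses only implicitly: $\sigma(I_0)=0$ would force $f$ to extend analytically across $I_0$, and then $D_h(f)\to 1$ there, contradicting the hypothesis.
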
 

The proof of Theorem \ref{limsup} uses the following two auxiliary results. The first one is Lemma 3.1 in \cite{FN} but we include its short proof for the sake of completeness. 

\begin{lem}[\cite{FN}]
    \label{Lemma1.5}
    Let $0< \eta <c<1$ be constants and let $I \subset \partial \mathbb{D}$ be an arc. Let $\mathcal{G} = \{I_j \}$ be a collection of pairwise disjoint dyadic subarcs of $I$ such that
\begin{equation}
    \label{5.0}
    \sum |I_j| \geqslant c  |I| . 
\end{equation}
Then there exists a subcollection $\mathcal{G}_1 \subseteq \mathcal{G}$ such that 
\begin{equation}
\label{1radial}
\sum_{I_j \in \mathcal{G}_{1}}\left|I_{j}\right| \geqslant \eta|I| 
\end{equation}
and
\begin{equation}
    \label{2radial}
\sum_{I_j \in \mathcal{G}: I_{j} \subset L} |I_{j}|\geqslant(c-\eta) |L| , 
\end{equation}
for any dyadic subarc $L \subseteq I$ which contains some arc of $\mathcal{G}_1$.

\end{lem}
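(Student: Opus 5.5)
We select $\mathcal{G}_1$ by a stopping-time argument on the dyadic subarcs of $I$, discarding the arcs of $\mathcal{G}$ that sit inside regions where $\mathcal{G}$ has low density. Say a dyadic subarc $L \subseteq I$ is \emph{bad} if
$$
\sum_{I_j \in \mathcal{G}: I_j \subset L} |I_j| < (c-\eta)|L| .
$$
Let $\{L_k\}$ be the maximal bad dyadic subarcs of $I$. They are pairwise disjoint, since two dyadic arcs are either nested or disjoint and maximality rules out nesting. Define $\mathcal{G}_1$ as the set of arcs $I_j \in \mathcal{G}$ that are not contained in any $L_k$.

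We first check \eqref{2radial}. Take a dyadic $L \subseteq I$ that contains some $I_j \in \mathcal{G}_1$. If $L$ were bad, it would lie in some maximal bad arc $L_k$. Then $I_j \subset L \subseteq L_k$, which contradicts $I_j \in \mathcal{G}_1$. So $L$ is not bad, and this is exactly \eqref{2radial}.

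Next we check \eqref{1radial}. Every arc of $\mathcal{G}\setminus \mathcal{G}_1$ lies in some $L_k$, and each $I_j$ lies in at most one $L_k$ by disjointness. Hence
$$
\sum_{I_j \in \mathcal{G}\setminus\mathcal{G}_1} |I_j| = \sum_k \sum_{I_j \subset L_k} |I_j| < (c-\eta)\sum_k |L_k| \le (c-\eta)|I| .
$$
Subtracting this from \eqref{5.0} gives
$$
\sum_{I_j\in\mathcal{G}_1}|I_j| > c|I| - (c-\eta)|I| = \eta|I| ,
$$
which is \eqref{1radial}.

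One point needs care. If $I$ itself is bad, the whole of $\mathcal{G}$ would be discarded. This cannot happen, because \eqref{5.0} says $\sum |I_j| \ge c|I| > (c-\eta)|I|$, so $I$ is not bad. The argument treats $I$ as the root of the dyadic tree, so $I$ is assumed to be dyadic or to carry its own dyadic subdivision. The infinite-collection case needs no extra work: maximal bad arcs exist because every bad arc has only finitely many dyadic ancestors inside $I$, and all the sums above are sums of nonnegative terms. The main step is really the choice of the stopping rule; once the bad arcs are defined this way, the rest is bookkeeping.
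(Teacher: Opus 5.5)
Your proof is correct and is essentially the paper's argument. The paper also takes the maximal dyadic subarcs $J\subseteq I$ of low $\mathcal{G}$-density, using the non-strict rule $\sum_{I_j\subset J}|I_j|\le(c-\eta)|J|$, discards the arcs of $\mathcal{G}$ contained in them, and uses the disjointness of these stopping arcs to bound the discarded length by $(c-\eta)|I|$.

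The only slip is in your estimate of the discarded length: the chain should read $\le (c-\eta)\sum_k|L_k|$ rather than $<$. If there are no bad arcs, both sides vanish and the strict inequality fails. The non-strict version still gives \eqref{1radial} with $\geqslant$, which is all that is required.
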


Proof. Consider the collection $\mathcal{F}$ of maximal dyadic subarcs $J \subseteq I$ such that
\begin{equation}
    \label{3radial}
\sum_{j : I_{j} \subset J }\left|I_{j}\right| \leq(c-\eta)|J|    
\end{equation}
and define $\mathcal{G}_{1}$ as the subcollection  of the arcs of $\mathcal{G}$ which are not contained in any arc of $\mathcal{F}$. Then \eqref{2radial} follows. Since the arcs in $\mathcal{F}$ are pairwise  disjoint, estimate \eqref{3radial} gives
$$
\begin{aligned}
& \sum_{\mathcal{J} \in \mathcal{G} \setminus \mathcal{G}_{1}}\left|J_{j}\right|=\sum_{J \in \mathcal{F}} \sum_{I_j \subset  J}\left|I_{j}\right| \leq \\
& \leq(c-\eta) \sum_{J \in \mathcal{F}}|J| \leq(c-\eta)|I| . 
\end{aligned}
$$
Then assumption \eqref{5.0} gives \eqref{1radial}.
\qed

\medskip

The next result is due to Hungerford and provides an estimate on the Hausdorff dimension of certain Cantor type sets. See \cite{Hun} or Theorem 10.5 of \cite{pom}. 

\begin{lem}[\cite{Hun}]
    \label{hungerford}
    Let $0<\varepsilon<c<1$ be fixed constants. Let $\mathcal{G}_{n}$ be nested collections of closed arcs of $\partial \mathbb{D}$ with pairwise disjoint interiors. Assume:
    
(a) We have $|J| \leq \varepsilon |I|$ for any pair of arcs $J \in \mathcal{G}_{n+1}$, $ I \in \mathcal{G}_{n}$ with $J \subset I$, for any $n=1,2, \ldots$.

(b) We have
$$
\sum_{J \in \mathcal{G}_{n+1}: J \subset I} |J| \geqslant c |I|, 
$$
for any $I \in \mathcal{G}_{n}$ and any $n=1,2, \ldots$.

Then 
$$
\operatorname{dim}\left(\bigcap_{n} \bigcup_{I \in \mathcal{G}_{n}} I \right) \geqslant 1-\frac{\log c}{\log \varepsilon}. 
$$
\end{lem}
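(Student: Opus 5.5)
The statement is Hungerford's lemma, a Cantor-set dimension estimate. I would prove it with the mass distribution principle: build a probability measure on $K=\bigcap_n \bigcup_{I\in\mathcal{G}_n} I$ whose mass on any arc of length $r$ is $\lesssim r^{s}$ for every $s<1-\log c/\log\varepsilon$.

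\textbf{Reduction.} Fix a root arc $I_0\in\mathcal{G}_1$ and restrict attention to its descendants. Since $\varepsilon<c$, we have $1-\log c/\log\varepsilon\in(0,1)$.

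\textbf{Construction of the measure.} Define $\mu(I_0)=1$. Inductively, for $I\in\mathcal{G}_n$ and each child $J\in\mathcal{G}_{n+1}$ with $J\subset I$, set
$$
\mu(J)=\mu(I)\,\frac{|J|}{\sum_{J'\subset I,\,J'\in\mathcal{G}_{n+1}}|J'|}\le \frac{\mu(I)\,|J|}{c\,|I|},
$$
where the inequality is hypothesis (b). These weights are consistent, so Kolmogorov extension (or a weak-$*$ limit) gives a probability measure supported on $K$; the shared endpoints of closed arcs are harmless. Iterating the inequality along the chain $I_0\supset I_1\supset\dots\supset I_n=J$ gives
$$
\mu(J)\le c^{-n}\,\frac{|J|}{|I_0|}.
$$
Hypothesis (a) gives $|J|\le\varepsilon^{n}|I_0|$, so $c^{-n}\le (|J|/|I_0|)^{\log c/\log\varepsilon}$. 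Hence
$$
\mu(J)\lesssim |J|^{\,1-\log c/\log\varepsilon}=|J|^{s_0},\qquad s_0:=1-\frac{\log c}{\log\varepsilon}.
$$

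\textbf{Covering estimate for a general arc $L$ (main obstacle).} The bound above holds only for construction arcs, and the children of an arc can have very different sizes, so I need $\mu(L)\lesssim|L|^{s_0}$ for an arbitrary arc $L$. Let $n$ be the largest generation such that $L$ meets the interior of only one arc $I\in\mathcal{G}_n$; if $L$ meets several generation-$n$ arcs, treat each separately. Then:

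\begin{itemize}
\item Split $\mu(L)$ over the children $J\subset I$ that meet $L$.
\item Children lying entirely inside $L$ contribute at most $\mu(I)\,|L|/(c|I|)$, by the defining formula for $\mu$.
\item At most two children stick out of $L$. Each of these is handled by recursing into it, which gives a geometric series controlled by (a).
\end{itemize}

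In the main term, combine $\mu(I)\le C|I|^{s_0}$ with $|L|\le|I|$ and $s_0<1$:
$$
\mu(I)\,\frac{|L|}{|I|}\le C|I|^{s_0-1}|L|\le C|L|^{s_0}.
$$
Summing the two boundary recursions produces a constant factor that may depend on $\varepsilon$ and $c$. To keep that factor under control I would first prove the estimate with exponent $s<s_0$, which leaves slack to absorb it.

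\textbf{Conclusion.} Frostman's lemma (the mass distribution principle) then gives $\dim K\ge s$ for every $s<s_0$, and letting $s\uparrow s_0$ gives $\dim K\ge s_0$.
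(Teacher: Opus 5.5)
The paper does not prove this lemma. It quotes it from Hungerford \cite{Hun} (see also Theorem 10.5 of \cite{pom}) and uses it as a black box in the proof of Theorem \ref{limsup}, so there is no internal argument to compare against. Your mass-distribution argument is a correct, self-contained proof along standard lines, provided you fix two points when you write it out.

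\textbf{The sign slip.} From $|J|\le\varepsilon^{n}|I_0|$ you get $c^{-n}\le (|J|/|I_0|)^{-\log c/\log\varepsilon}$, not $(|J|/|I_0|)^{+\log c/\log\varepsilon}$. Indeed $c^{-n}\ge 1$ while $\log c/\log\varepsilon\in(0,1)$. The bound you then state, $\mu(J)\le (|J|/|I_0|)^{s_0}$, is nevertheless the correct one.

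\textbf{The role of the slack $s<s_0$.} Its purpose is not to absorb a constant factor. Constants are harmless in the mass distribution principle, which is the direction you actually need (rather than Frostman's lemma). The real issue is that the boundary recursion has unboundedly many levels.

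A chain $A_1\supset A_2\supset\cdots$ of sticking-out construction arcs can be arbitrarily long. After the first step, $L\cap A_k$ is an end-arc of $A_k$, so only one child sticks out at each level. Level $k$ contributes at most
\[
\frac{\mu(A_k)\,|L\cap A_k|}{c\,|A_k|}\lesssim |A_k|^{s_0-1}\,|L\cap A_k| .
\]
The naive bound $|L|^{s_0}$ per level does not sum over the levels. Instead, use $|L\cap A_k|\le|A_k|$ and $s<s_0$ to bound level $k$ by $|A_k|^{s_0-s}|L|^{s}$. By (a), $\sum_k |A_k|^{s_0-s}\lesssim\sum_k\varepsilon^{k(s_0-s)}<\infty$. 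This is the geometric series you allude to, and it gives $\mu(L)\le C(s,\varepsilon,c)\,|L|^{s}$.

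You can even keep the exponent $s_0$ by splitting the chain at the level where $|A_k|$ drops below $|L|$:
\begin{itemize}
\item the terms $|A_k|^{s_0-1}|L|$ with $|A_k|\ge|L|$ grow geometrically up to the last one, which is at most $|L|^{s_0}$;
\item the terms $|A_k|^{s_0}$ with $|A_k|<|L|$ decay geometrically.
\end{itemize}

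\textbf{Atoms.} Note explicitly that $\mu$ has no atoms. A point lies in at most two arcs of each generation, and $\mu(J)\le\varepsilon^{(n-1)s_0}$ for $J\in\mathcal{G}_n$. This justifies both discarding shared endpoints and replacing $L$ by $L\cap I$.
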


\medskip

\begin{proof}[Proof of Theorem \ref{limsup}]
    
Let $\mu=\mu(f)$ be the measure defined in \eqref{4radial}. Since $f$ is inner the measure $\mu$ is a sum of Dirac masses at points in $\D$ and a measure on $\partial \D$ which is singular with respect to $dm$. The description of contractive analytic mappings given in Theorem \ref{Zeros} provides a constant $C>0$ such that
\begin{equation}
    \label{5radial}
\frac{\mu(Q)}{l(Q)} \geqslant C P[\mu ] \left(z_{Q}\right) \, \text{ if } \, P[\mu]\left(z_{Q}\right) \leqslant C . 
\end{equation}
Without loss of generality, we can assume $C<1$. Fix an arc $I \subset \partial \D$. Next we will find a constant $t=t(C) < 1$ and a Cantor type set contained in 
$$
E= \left\{\xi \in I: \limsup_{r \rightarrow 1} |f(r\xi)|<t \right\}
$$ 
and will estimate its Hausdorff dimension using Lemma \ref{hungerford}. 

Let $0<K_1 < K < 1/10$ be two constants satisfying $0<K_{1}< K / 10 <\min \left\{ C / 2, P [\mu] (z_{I}) \right\} / 100$. Since $f$ is inner the measure $\mu$ is singular with respect to the Lebesgue measure on $\partial \D$ and consequently
\begin{equation}
    \label{6radial}
    \lim_{r \rightarrow 1} P[\mu](r \xi)=0, \, \, \text{a.e.} \,  (d m) \,  \, \xi \in \partial \mathbb{D} . 
\end{equation}
Hence, there exists an arc $I^{(0)} \subset I$ such that $K_{1} / 2 \leq P[\mu] (z_{I^{(0)}}) \leq K_{1}$. Condition \eqref{5radial} gives $\mu\left(\overline{Q\left(I^{(0)} \right)}\right) \geq C K_{1} |I^{(0)}| / 2$. The arc $I^{(0)}$ is declared to be the generation zero of the Cantor type set. Let $\{I_j \}$ be the collection of maximal dyadic subarcs of $I^{(0)}$ such that
$$
P[\mu]\left(z_{I_{j}}\right) \geqslant K . 
$$
The maximality and Harnack's inequality give $P[\mu]\left(z_{I_{j}}\right) \leqslant 2 K$. Note that since $K<1 / 10$, the function $f$ does not have zeros in $Q\left(I^{(0)}\right) \backslash \bigcup Q\left(I_{j}\right)$. Moreover the restriction of $\mu$ to $\overline{I^{(0)}}$ is concentrated in $\cup \overline{I_j}$. Consequently, $\mu\left(\overline{Q\left(I^{(0)}\right)}\right)=\sum \mu\left(\overline{Q\left(I_{j}\right)}\right)$. Since $ 2K< C $, condition \eqref{5radial} gives 
\begin{equation}
\label{7radial}
\frac{\mu(\overline{Q(I_j)})}{|I_j|} \geqslant C K , \quad j=1,2,\ldots .
\end{equation}
Observe that 
$$
\frac{\mu\left(\overline{Q\left(I_{j}\right)}\right)}{\left|I_{j}\right|} \lesssim P [\mu] \left(z_{I_{j}}\right) \leqslant 2  K , \quad j=1,2,\ldots . 
$$
Hence there exists a universal constant $C_0 >0$ such that 
$$
\sum\left|I_{j}\right| \geqslant \frac{C_0}{  K} \sum \mu\left( \overline{Q\left(I_{j}\right)}\right)=\frac{C_0 \mu (\overline{Q (I^{(0)})})}{  K}  \geqslant \frac{C_0  C K_{1}}{2  K}\left|I^{(0)}\right| . 
$$
Apply Lemma \ref{Lemma1.5} with constants $O<\eta < C_0 C K_{1} (2  K)^{-1}:=C_{1}$ to find a subcollection $\mathcal{G}_{1}=\mathcal{G}_{1}\left(I^{(0)}\right)$ of $\{ I_{j}\} $ satisfying conditions \eqref{1radial} and \eqref{2radial} of Lemma \ref{Lemma1.5}. The collection $\mathcal{G}_1$ is the first generation of the Cantor type set. Condition \eqref{1radial} reads  
\begin{equation}
    \label{8radial}
    \sum_{I_j \in \mathcal{G}_1} |I_j| \geq \eta |I^{(0)}|.
\end{equation}
Let $L$ be a dyadic subarc of $I^{(0)}$ containing an arc of $\mathcal{G}_1$. By \eqref{2radial} of Lemma \ref{Lemma1.5}, we have
$$
\sum_{I_j \subset L } |I_j| \geq (C_1 - \eta ) |L|.
$$
Then \eqref{7radial} and Lemma \ref{lemma3} show that there exists a constant $0< c(K , K_1 , \eta ) < 1$ such that $|f(z_L)| \leq c(K, K_1, \eta) $. Hence
\begin{equation}
    \label{9radial}
    |f(r \xi)| \leq C(K, K_1 , \eta) < 1, \text{ if } r\xi \in Q(I^{(0)}) \setminus \bigcup_{I_j \in \mathcal{G}_1} Q(I_j) ,  \quad  \xi \in \bigcup_{\mathcal{G}_1} I_j . 
\end{equation}
Since $P [\mu] (z_{I^{(0)}})$ and $P[\mu] (z_{I_j})$, $I_j \in \mathcal{G}_1$, are small, Lemma 
\ref{lemma3} gives 
\begin{align}
\label{dues}
 1 & - |f(z_{I^{(0)}})|^2 \simeq \log  |f(z_{I^{(0)}})|^{-2} \simeq P [\mu] (z_{I^{(0)}}) \simeq K_1 , \\ 
\notag  1 & - |f(z_{I_j})|^2 \simeq \log  |f(z_{I_j})|^{-2} \simeq P [\mu] (z_{I_j}) \simeq  K  . 
\end{align}
Since $f$ is contractive, there exists a constant $D_1 < D$ such that 
$$ d_h (f(z_{I^{(0)}}), f (z_{I_j})) \leq D_1 d_h (z_{I^{(0)}}, z_{I_j}) .
$$ 
Using \eqref{dues} we deduce that there exists a constant $C_2 >0$ such that  
$$
\log \left(\frac{K}{K_1} \right) \leq D_1 \log \left (\frac{|I^{(0})}{|I_j|} \right) + C_2 , 
$$
that is, there exists a constant $C_3  >0$ such that 
\begin{equation}
    \label{10radial}
    \frac{|I_j|}{|I^{(0)}|} \leq C_3 \left(\frac{K_1}{K} \right)^{1/D_1} . 
\end{equation}
This finishes the construction of the first generation $\mathcal{G}_{1} =\mathcal{G}_{1}\left(I^{(0)}\right)$ of the Cantor type set. We now explain how the next generations are constructed. For each $I \in \mathcal{G}_{1}$ we have $K \leq P[\mu]\left(z_{I}\right) \leq 2 K$. Consider the family $\mathcal{F}(I)= \{ J_{k} \}$ of maximal dyadic arcs of $I$ such that
$$
P [\mu] \left(z_{J_{k}}\right) \leqslant K_{1} . 
$$
Note that \eqref{6radial} gives
\begin{equation}
    \label{11radial}
    \sum\left|J_{k}\right|=|I| . 
\end{equation}
By construction $P [\mu] (z) \geqslant K_1 / 2$, $z \in Q(I) \setminus \cup Q\left(J_{k}\right)$. 
By part (a) of Lemma \ref{lemma3}, we deduce
\begin{equation}
    \label{12radial}
    |f(z)| \leq c(K, K_1 , \eta) < 1, \quad z \in Q(I) \setminus \cup Q\left(J_{k}\right). 
\end{equation}
Now for each $J_{k} \in \mathcal{F}(I)$ we repeat the construction of the first generation replacing $I^{(0)}$ by $J_{k}$ and obtain a collection $\mathcal{G}_{1}\left(J_{k}\right)$ of subarcs of $J_{k}$. The second generation of the Cantor type set is then defined as
$$
\mathcal{G}_{2}=\bigcup_{I \in \mathcal{G}_{1}} \bigcup_{J_{k} \in \mathcal{F}(I)} \mathcal{G}_{1}\left(J_{k}\right) . 
$$
We continue the construction by induction. Estimates \eqref{9radial} and \eqref{12radial} give
\begin{equation}
\label{13radial}
    \limsup _{r \rightarrow 1}|f(r \xi)| \leq c\left(K, K_{1}, \eta \right)<1, \quad \xi\in \bigcap_{n} \bigcup_{I \in \mathcal{G}_{n}} I . 
\end{equation}
Estimates \eqref{8radial} and \eqref{11radial} give
$$
\sum_{J \in \mathcal{G}_{n+1}: J \subset I}|J| \geqslant \eta |I| \text {, for any } I \in \mathcal{G}_{n}, \quad n=1,2,\ldots , 
$$
and \eqref{10radial} says
$$
\frac{|J|}{|I|} \leq C_{3}\left(\frac{K_{1}}{K}\right)^{1 / D_{1}},
$$
for any $J \in \mathcal{G}_{n+1}, I \in \mathcal{G}_{n}$ with $J \subset I$, $n=1,2,\ldots$. Now Lemma \ref{hungerford} gives
$$
\operatorname{dim} \left(\bigcap_{n} \bigcup_{I \in \mathcal{G}_{n}} I \right) \geqslant 1-\frac{\log \eta}{\log C_{3}+ D_{1}^{-1} \log \left(\frac{K_{1}}{K}\right)} . 
$$
Note that we can take any $0<\eta <C_1= C_0 C K_{1} (2  K)^{-1}$. Fix $K>0$, pick $\eta= C_0 C K_1 (4  K)^{-1}$ and let $K_1 \to 0$ to deduce 
$$
\operatorname{dim} \left(\bigcap_{n} \bigcup_{I \in \mathcal{G}_{n}} I\right) \geqslant 1-D_1 , 
$$
and since $D_1 < D$, \eqref{13radial} completes the proof.

\end{proof}

Given an inner function $f$ and a hyperbolic disc $B \subset \D$, let $E(f, B)$ be the set of points $\xi \in \partial \D$ such that there exists $0< r_0 = r_0 (\xi) < 1$ with $f(r \xi ) \in B$ for any $r_0 < r < 1$. The conformal invariance of contractive analytic mappings gives the following version of Theorem \ref{limsup}.

\begin{cor}
Let $f$ be an inner function  with $\sup \{D_h (f) (z) : z \in \D\}<D<1$. 
Then there exists $M=M(D)>0$ such that $\operatorname{dim}  ( E(f, B) ) > 1- D$, for any hyperbolic disc $B$ of hyperbolic radius $M$. Moreorer, $M(D) \rightarrow 0$ as $D \rightarrow 0$.

\end{cor}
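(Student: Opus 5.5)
The plan is to reduce the corollary to Theorem \ref{limsup} by conformal invariance. Let $B$ be a hyperbolic disc of hyperbolic radius $M$ centered at a point $a \in \D$, and let $\tau_a$ be the involutive automorphism with $\tau_a(a)=0$. Then $\tau_a(B)$ is the hyperbolic disc of radius $M$ centered at $0$, which is the Euclidean disc $\{|w|<t_M\}$ with $t_M = \tanh M$ (or the corresponding value for the normalization of $d_h$ used in the paper). Consider $g = \tau_a \circ f$. It is again an inner function, and it is contractive with $D(g) = D(f) < D$. Moreover, $f(r\xi) \in B$ if and only if $|g(r\xi)| < t_M$. So
$$
E(f,B) = \{\xi \in \partial \D : |g(r\xi)| < t_M \text{ for all } r \text{ close to } 1\} \supseteq \{\xi \in \partial\D : \limsup_{r\to 1} |g(r\xi)| < t_M\}.
$$

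Next I apply Theorem \ref{limsup} to $g$ with the constant $D$. It gives $t = t(D) < 1$, depending only on $D$, such that the set $\{\xi \in I : \limsup_{r \to 1} |g(r\xi)| < t\}$ has Hausdorff dimension greater than $1-D$ for every arc $I$. Taking $I = \partial \D$, or any arc, I choose $M = M(D)$ so that $t_M = t(D)$, that is, $M(D) = \operatorname{arctanh} t(D)$ up to the normalization of $d_h$. If a strict inequality is needed, any $M$ with $t_M \geq t(D)$ also works. By the inclusion above, $\operatorname{dim} E(f,B) > 1-D$.

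Finally, $t(D) \to 0$ as $D \to 0$ by Theorem \ref{limsup}, and $t \mapsto \operatorname{arctanh} t$ is continuous at $0$. Hence $M(D) \to 0$.

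The only point requiring care is that the constant $t(D)$ in Theorem \ref{limsup} depends only on $D$ and not on $f$ (for instance not on $f(0)$). Otherwise the composition with $\tau_a$, which depends on the center of $B$, would spoil uniformity. The proof of Theorem \ref{limsup} chooses $K, K_1$ in terms of $P[\mu](z_I)$ as well as $C$. The resulting $t = c(K,K_1,\eta)$ is controlled by $K$ through Lemma \ref{lemma3} and the contractivity constant, so I would rely on the statement as given, in which $t = t(D)$. The remaining steps are routine.
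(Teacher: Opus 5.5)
Your proposal is correct and is essentially the paper's argument, which derives the corollary in one line from conformal invariance: apply Theorem \ref{limsup} to $\tau_a\circ f$, noting that $\tau_a(B)$ is a disc $\{|w|<t_M\}$ centred at the origin. You were right to flag that the reduction needs $t=t(D)$ to depend only on $D$ (the centre $a$, hence $g(0)=\tau_a(f(0))$, varies with $B$); that independence is exactly what the statement of Theorem \ref{limsup} asserts, so relying on it is legitimate.
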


\section{Examples}\label{section5}

\subsection{Contractive domains}

As mentioned in the introduction, a domain $\Omega \subset  \mathbb{D}$ will be called a contractive domain if any analytic mapping $f: \D \rightarrow \Omega$ satisfies $\sup \{D_h (f) (z) : z \in \D\} < 1$. By a normal families argument, this is equivalent to the existence of a constant $D=D(\Omega)<1$ such that $\sup \{D_h (f) (z) : z \in \D\} <D$, for any analytic mapping $f: \mathbb{D} \rightarrow \Omega$. We will use the following elementary result.

\begin{lem}
    \label{2contractivedomain}
    Given $M>0$, there exists $0<\rho=\rho(M)<1$ such that if an analytic mapping $f: \D \rightarrow \mathbb{D}$ satisfies
$$
\sup _{z \in \mathbb{D}} \frac{\left(1-|z|^{2}\right)\left|f^{\prime}(z)\right|}{1-|f(z)|^{2}} \geqslant \rho , 
$$
then $f(\D)$ contains a hyperbolic disc of hyperbolic radius $M$.
\end{lem}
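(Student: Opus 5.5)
We reduce to the origin by conformal invariance and then argue by contradiction with a normal families argument. Since $D_h(\tau_1\circ f\circ\tau_2)=D_h(f)\circ\tau_2$ and automorphisms preserve hyperbolic discs, we may assume that $f$ nearly attains the supremum at $0$ and that $f(0)=0$. Concretely, if $\sup_z D_h(f)(z)\geq\rho$, pick $z_0$ with $D_h(f)(z_0)\geq \rho'$, where $\rho'<\rho$ is slightly smaller. Then set $g=\tau_{f(z_0)}\circ f\circ\tau_{z_0}$, so that $g(0)=0$, $|g'(0)|=D_h(f)(z_0)\geq \rho'$, and $g(\D)=\tau_{f(z_0)}(f(\D))$. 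It therefore suffices to prove the following: given $M>0$ there is $\rho<1$ such that every analytic $g:\D\to\D$ with $g(0)=0$ and $|g'(0)|\geq\rho$ has $g(\D)\supset \{w: d_h(w,0)<M\}$. This gives the conclusion for $f$, since the image of a hyperbolic disc under an automorphism is again a hyperbolic disc of the same radius.

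To prove the normalized statement, suppose it fails. Then there are $g_n:\D\to\D$ with $g_n(0)=0$, $|g_n'(0)|\to1$, and points $w_n\notin g_n(\D)$ with $d_h(w_n,0)<M$. By Montel's theorem we pass to a subsequence with $g_n\to g$ locally uniformly and $w_n\to w$, where $d_h(w,0)\leq M$, so $|w|\leq r(M)<1$. The limit satisfies $|g'(0)|=1$ and $g(0)=0$, so by the Schwarz Lemma $g(z)=\lambda z$ with $|\lambda|=1$.

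Next we contradict $w_n\notin g_n(\D)$ using Hurwitz/Rouch\'e. Choose $r$ with $r(M)<r<1$. On $|z|=r$ we have $|\lambda z-w_n|\geq r-r(M)>0$, and by uniform convergence $|g_n(z)-\lambda z|<r-r(M)$ there for large $n$. Rouch\'e's theorem then shows that $g_n(z)-w_n$ has exactly one zero in $|z|<r$, so $w_n\in g_n(\D)$, which is a contradiction.

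There is no real obstacle here. The only care needed is to keep the radius bound uniform: we need $|w_n|\leq r(M)$ with $r(M)$ depending only on $M$, and the Rouch\'e step on the fixed circle $|z|=r$ handles this. One could also make $\rho(M)$ explicit using Bloch/Landau-type estimates for the composition with $\tau$, but the compactness argument suffices.
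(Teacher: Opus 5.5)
Your proof is correct and follows essentially the same route as the paper: you normalize with automorphisms so that the near-extremal point sits at the origin, extract a locally uniform limit, and identify it as a rotation via the Schwarz Lemma. Your Rouch\'e argument on the fixed circle $|z|=r$ spells out the final step that the paper only asserts, namely that for large $n$ the image $f_n(\D)$ contains a hyperbolic disc of radius $M$.
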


\begin{proof}
We argue by contradiction. Assume there exist $M>0$ and mappings $f_{n} : \mathbb{D} \rightarrow \mathbb{D}$, $n=1,2,\ldots$, with
$$
\sup _{z \in \mathbb{D}} \frac{\left(1-|z|^{2}\right) |f_{n}^{\prime} (z) |}{1-\left|f_{n}(z)\right|^{2}}>1-\frac{1}{n} , 
$$
such that $f_{n}(\D)$ does not contain any hyperbolic disc of hyperbolic radius $M$, $n=1,2,\ldots$. Replacing $f_{n}$ by $\tau_{f_n(a_n)} \circ f_{n} \circ \tau_{a_n} $, for appropriate $ a_{n} \in \D$, we can assume $\left|f_{n}^{\prime}(0)\right|>1- n^{-1}$. Then, taking a subsequence, we can assume that $\left\{f_{n}\right\}$ converges uniformly on compacts of $\D$ to a rotation. Hence $f_{n}(\D)$ contains hyperbolic discs of hyperbolic radius $M$ if $n$ is sufficiently large.
\end{proof}

\medskip

\begin{theorem}
    \label{contractivedomain}
    A domain $\Omega \subset \D$ is a contractive domain if and only if there exists $R=R(\Omega) >0$ such that $\Omega$ does not contain any hyperbolic disc of hyperbolic radius $R$.  
\end{theorem}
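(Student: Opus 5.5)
The ``if'' direction follows directly from Lemma \ref{2contractivedomain}. Assume $\Omega$ contains no hyperbolic disc of hyperbolic radius $R$. Take $M=R$ and let $\rho=\rho(R)<1$ be the constant given by Lemma \ref{2contractivedomain}. If some analytic $f:\D\rightarrow\Omega$ had $\sup_{z\in\D} D_h(f)(z)\geq\rho$, then $f(\D)\subset\Omega$ would contain a hyperbolic disc of radius $R$, which is impossible. Hence $\sup_{z} D_h(f)(z)<\rho<1$ for every analytic $f:\D\rightarrow\Omega$, so $\Omega$ is contractive, even with a uniform constant.

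For the ``only if'' direction I will argue by contrapositive. Suppose that for every $n$ the domain $\Omega$ contains a hyperbolic disc $B_n=\{w: d_h(w,a_n)<n\}$. I will construct an analytic $f:\D\rightarrow\Omega$ with $\sup_{z} D_h(f)(z)=1$, built piecewise from the discs $B_n$.

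The natural map into $B_n$ is $f_n=\tau_{a_n}\circ(r_n\,\mathrm{id})$, where $r_n<1$ is the Euclidean radius corresponding to hyperbolic radius $n$, so $r_n\to1$. This gives $D_h(f_n)(0)=r_n$. Each $f_n$ maps into $\Omega$, but each individually only shows $\sup D_h\geq r_n$. That already contradicts the uniform formulation stated just before Lemma \ref{2contractivedomain}, namely the existence of $D(\Omega)<1$ with $\sup D_h(f)<D$ for all $f:\D\rightarrow\Omega$.

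The main obstacle is that this uniform formulation is asserted via ``a normal families argument'', and I would rather not rely on it blindly. In the notation above, the argument I would try runs as follows. Suppose $\Omega$ is contractive but no uniform $D$ exists, so there are $g_n:\D\rightarrow\Omega$ with $D_h(g_n)(z_n)\rightarrow1$. Precomposing with automorphisms of $\D$ (conformal invariance), I may take $z_n=0$, so $D_h(g_n)(0)\to1$. The difficulty is producing a single map with supremum $1$, and the images $g_n(\D)$ may escape toward $\partial\D$, so a limit of the $g_n$ themselves is not obviously useful.

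The cleanest route avoids limits altogether: build the single map directly, using the fact that $\Omega$ is connected and contains the discs $B_n$. Let $U=\bigcup_n B_n$ together with thin connecting tubes inside $\Omega$ joining the $B_n$, arranged so that $U$ is a simply connected subdomain of $\Omega$. Let $F:\D\rightarrow U$ be a Riemann map. For $w_n=F^{-1}(a_n)$, the map $F\circ\tau_{w_n}$ pulls back to a map into $U\supset B_n$. Comparing with the inclusion $B_n\subset U$ via the Schwarz--Pick lemma shows that the hyperbolic density of $U$ at $a_n$ is at most that of $B_n$. This density ratio tends to that of $\D$ as $n\to\infty$, because the discs $B_n$ are hyperbolically huge. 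Hence $D_h(F)(w_n)\rightarrow1$, and $F$ is an analytic map into $\Omega$ that is not contractive. The delicate step is making $U$ simply connected, which requires choosing the tubes as disjoint arcs thickened slightly. If $U$ is not simply connected, I would instead use its universal covering map, which still gives $D_h(F)(w_n)\to1$ by the same Schwarz--Pick comparison.
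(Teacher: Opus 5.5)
Your proof is correct. The ``if'' direction is exactly the paper's argument via Lemma~\ref{2contractivedomain}, but the ``only if'' direction takes a genuinely different route.

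The paper tests the uniform constant $D(\Omega)$, whose existence it asserts via an unspecified normal families argument, against the maps $\tau_a(rz)$. Each of those maps is itself contractive, so they say nothing without the uniform formulation. The naive normal families argument is also not straightforward: when $g_n(0)\to\partial\D$ the limits are constants, and renormalizing by $\tau_{g_n(0)}$ changes the target domain. This is the difficulty you flagged.

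Your covering-map argument avoids this by producing a single non-contractive map into $\Omega$, so it works directly with the definition. Write $\lambda_U$ for the hyperbolic density of $U$, normalized so that $\lambda_{\D}(a)=(1-|a|^2)^{-1}$. The key facts are:
\begin{itemize}
\item $D_h(F)(w)=\lambda_{\D}(F(w))/\lambda_U(F(w))$ for a universal covering $F:\D\to U$;
\item domain monotonicity, $\lambda_U\le\lambda_{B_n}$ on $B_n$;
\item $\lambda_{B_n}(a_n)=\lambda_{\D}(a_n)/r_n$.
\end{itemize}
Together these give $D_h(F)(w_n)\ge r_n\to 1$.

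The tubes and the worry about simple connectivity are unnecessary: you can take $U=\Omega$ itself. Moreover, the Schwarz--Pick lemma for hyperbolic metrics gives $D_h(f)(z)\le \lambda_{\D}(f(z))/\lambda_\Omega(f(z))$ for every analytic $f:\D\to\Omega$. So the universal covering map $F$ of $\Omega$ is extremal, and $\sup_f\sup_z D_h(f)(z)=\sup_z D_h(F)(z)$. This is a rigorous replacement for the paper's normal families remark: $\Omega$ is contractive if and only if its covering map is, with uniform constant $D(\Omega)=\sup_z D_h(F)(z)$.

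The paper's route is shorter once the uniform formulation is granted. Yours is self-contained and in fact proves that formulation.
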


\begin{proof}[Proof of Theorem \ref{contractivedomain}]
    
For $a \in \D$ let $\tau_a =\tau^{-1}_a$ be the automorphism of $\D$ with $\tau_a (0)=a$. Assume that $\Omega$ contains a pseudohyperbolic disc $D$ centered at $a$ of pseudohyperbolic radius $r<1$. Then $f(z)=\tau_a (r z)$ satisfies $f(\D) \subset D \subset \Omega$ and $D_h (f) (0) = r$. Hence if $\Omega$ is a contractive domain, then $r \leq D(\Omega) < 1$. Consequently a contractive domain cannot contain hyperbolic discs of arbitrary large hyperbolic radius. The converse implication follows from Lemma \ref{2contractivedomain}.
\end{proof}

Let $K$ be a relatively compact subset of $\D$ and let $f$ be a universal covering map of $\D$ onto $ \D \setminus K$. Assume that $\sup \{d_h (z , K) : z \in \D \} < \infty$. Theorem \ref{contractivedomain} gives that $f$ is contractive. Note also that if $K$ has zero logarithmic capacity, then $f$ is inner. 

\subsection{Contractive versus bounded compression}

An analytic mapping $f : \D \rightarrow \D$ has bounded compression if the hyperbolic diameters of the images of hyperbolic discs of hyperbolic radius $1$ are uniformly bounded below. In \cite{ivriinicolau2025} several different descriptions of these mappings are presented. In this section we consider the singular doubling probability measure on $\sigma$ defined by \eqref{1exemple} 
and discuss when it leads to a contractive inner function or to an inner function of bounded compression.

We first show that $\sigma$ is an Aleksandrov-Clark measure of a contractive inner function  if and only if  $1 / 4<p<1 / 2$. By Theorem \ref{B2weights}, this amounts to showing that $P[\sigma]\left(z_{I}\right)$ is comparable to $ \sigma(I) / |I|$ for any arc $I \subset \D $ if and only if $1 / 4<p<1 / 2$.

Since $\sigma$ is a doubling measure, we can restrict our attention to dyadic arcs $I \subset \partial \D$. Fix a dyadic arc $I$ with $|I| =2^{-N}$. Standard estimates of the Poisson kernel show
$$
P [\sigma] (z_I ) \simeq \frac{\sigma(I)}{|I|}+\sum_{k = 1}^N \frac{\sigma\left(2^{k} I \setminus 2^{k-1} I\right)}{2^{2k}|I|} . 
$$
Let $I_{n} \in \mathcal{D}_n$ with $I \subset I_n$, $n=0, \cdots, N$. Since $\sigma$ is doubling, $\sigma (2^{k} I \setminus 2^{k-1} I) \simeq \sigma (I_{N-k})$. Then   
$$
P [ \sigma] (z_{I} ) \simeq \frac{\sigma(I)}{|I|}+\sum_{k=1}^{N} \frac{\sigma\left(I_{N-k}\right)}{2^{2 k}|I|}. 
$$
By construction $\sigma(I)=p^{\alpha(k, I)}(1-p)^{k-\alpha(k, I)} \sigma\left(I_{N-k}\right)$, where $\alpha(k, I)$ is the number of dyadic arcs $\left\{I_{n}: n=N-k+1, \ldots, N\right\}$ that have a proportion $p$ of the mass of their predecessor. Hence
$$
P [\sigma] (z_{I}) \simeq \frac{\sigma (I)}{|I|}+\sum_{k=1}^{N} \frac{p^{-\alpha(k, I)}{(1-p)}^{-(k-\alpha(k ,  I))}}{2^{2k}}  \frac{\sigma (I)}{|I|}. 
$$
We deduce that $P[\sigma]\left(z_{I}\right)$ is comparable to $ \sigma (I) / |I|$ if and only if
\begin{equation}
    \label{3example}
\sum_{k=1}^{N} \frac{p^{-\alpha(k, I)}(1-p)^{-k+\alpha(k, I)}}{2^{2 k}} \lesssim 1 .
\end{equation}
Since \eqref{3example} must be verified for every dyadic arc $I$ and $0<p<1/2$, the worst possible case occurs when $\alpha(k, I)=k$ for all $1 \leq k \leq N$, that is, when each predecessor of $I$ gets proportion $p$ of mass from its predecessor. Hence \eqref{3example} holds for all dyadic arcs $I$ if and only if
$$
\sum_{k=1} \frac{p^{-k}}{2^{2 k}} \lesssim 1,
$$
which holds if and only if $p>1 / 4$. 

\medskip

Finally, we show that for any $0<p<1 / 2$,  the measure $\sigma$ is the Aleksandrov-Clark measure of an inner function with bounded compression. We will use the following description of these measures given in \cite{ivriinicolau2025}: a positive singular measure $\sigma$ on $\partial \D$ is the Aleksandrov-Clark measure of an inner function of bounded compression if and only if for any $\varepsilon >0$ there exists $N=N(\varepsilon ) >0$ such that for any heavy arc $ I \subset \partial \D $ there exists a subarc $J \subset I$ with $|J| \geq 2^{-N} |I|$ such that $\sigma (J) / |J| \leq \varepsilon \sigma (I) / |I|$. Now observe that any dyadic arc $J \in \mathcal{D}_{n}$ contains a dyadic subarc $J \subset I$, $J \in \mathcal{D}_{n+k}$ with
$$
\frac{\sigma (J)}{|J|}=(2 p)^{k} \frac{\sigma (I)}{|I|}, 
$$
for any $k=1,2,\ldots$. Hence, given $\varepsilon >0$ we have 
$$
\frac{\sigma (J)}{|J|} <\varepsilon \frac{\sigma (I)}{|I|}
$$
if $k \geq (\log \varepsilon)(\log (2p))^{-1}$. Since $\sigma$ is a doubling measure, this shows that $\sigma$ is an Aleksandrov-Clark measure of an inner function of bounded compression.
\qed

\subsection{Composition operators}

Let $\mathcal{B}$ be the classical Bloch space of analytic functions $g: \D \rightarrow \mathbb{C}$ such that $\|g\| = \sup \{(1-|z|^2) |g' (z)| : z \in \D\} < \infty$. Given an analytic mapping $f: \D \rightarrow \D$, consider the composition operator $C_f : \mathcal{B} \rightarrow \mathcal{B}$ defined by $C_f (g) = g \circ f$, $g \in \mathcal{B}$. By the chain rule, 
\begin{equation}
    \label{comp}
    (1-|z|^2) |(g \circ f)' (z)|) = D_h (f) (z) (1-|f(z)|^2)|g' (f(z))| , \quad z \in \D, 
\end{equation}
and hence $\|C_f (g)\| \leq D(f) \|g \|$ for any $g \in \mathcal{B}$. Here $D(f) = \sup \{D_h (f) (z) : z \in \D \}$. It also shows that there exists a constant $D<1$ such that $\|C_f (g)\| \leq D \|g \|$ for any $g \in \mathcal{B}$ if and only if $f$ is contractive. It is well known that the essential norm $\|C_f\|_{e}$ satisfies 
$$
\|C_f\|_e = \lim_{c \to 1} \sup \{D_h (f) (z) : |f(z)|>c \}. 
$$
See \cite{Mon}. From Lemma \ref{primer} we deduce that $\|C_f \|_e < 1$ if and only if $f$ is contractive.

\vspace{-4pt}

\bibliographystyle{amsplain}

\vspace{2cm} 
\noindent
Departament de Matem\`atiques, 
\\Universitat Aut\`onoma de Barcelona and \\
Centre de Recerca Matem\`atica \\
08193 Barcelona\\
Email: \href{mailto:artur.nicolau@uab.cat}{\texttt{artur.nicolau@uab.cat}}

\end{document}